\newtheorem{thm}{Theorem}[section]
\newtheorem{lem}[thm]{Lemma}
\newtheorem{cor}[thm]{Corollary}
\newtheorem{prop}[thm]{Proposition}
\theoremstyle{definition}
 \newtheorem{exa}[thm]{Example}
\begin{document}

\title{SUBGROUPS OF AN ABELIAN GROUP, RELATED IDEALS OF THE GROUP RING, 
AND QUOTIENTS BY THOSE IDEALS}
%\author{Hideyasu Kawai}
%\address{Department of General Education, Ishikawa National College 
%of Technology, Tsubata, Ishikawa 929-0392 Japan}
%\email{kawai@ishikawa-nct.ac.jp}
\subjclass[2000]{Primary 16S34 ; Secondary 13C99}
\maketitle

% \begin{center}
% {\bf ON A CORRESPONDENCE BETWEEN SUBGROUPS OF AN ABELIAN GROUP AND 
%IDEALS OF THE GROUP RING }

%  \bigskip
 
  \begin{center}
    {\small
    HIDEYASU KAWAI \\ \medskip
    General Education, Ishikawa College, National Institute of Technology,\\
    Ishikawa 929-0392, Japan\\
    {\tt kawai@ishikawa-nct.ac.jp}
    }
  \end{center}

% \end{center}

\medskip
\noindent
{\small {\bf Abstract.} Let $RG$ be the group ring of an abelian group $G$ over a commutative ring $R$ with 
identity. An injection $\Phi$ from the subgroups of $G$ to the non-unit ideals of $RG$ 
is well-known. It is defined by $\Phi(N)=I(R,N)RG$ 
where $I(R,N)$ is the augmentation ideal of $RN$, and each ideal $\Phi(N)$ has a property : 
$RG/\Phi(N)$ is $R$-algebra isomorphic to $R(G/N)$. Let $\frak T$ be 
the set of non-unit ideals of $RG$. While the image of $\Phi$ is rather a small subset of 
$\frak T$, we give conditions on $R$ and $G$ for the image of $\Phi$ to have some 
distribution in $\frak T$. In the last section, we give criteria for choosing an element $x$ of $RG$ 
satisfying $RG/xRG$ is $R$-algebra isomorphic to $R(G/N)$ for a subgroup $N$ of $G$.

\medskip
\noindent
{\bf Keywords:} abelian group, group ring, ideal, nilradical, Jacobson radical, residue class ring
}

\bigskip

\section{Introduction}%%%%%%%%%%%%%%%% 1. %%%%%%%%%%%%%%%%%%%%%
In this paper, all groups are abelian and written multiplicatively, all rings 
are commutative with identity. We denote the group ring of a group $G$ over a 
ring $R$ by $RG$, and the augmentation ideal by $I(R,G)$. Recall that $I(R,G)$ is 
the kernel of the $R$-algebra homomorphism from $RG$ to $R$ induced by collapsing 
$G$ to $1$, and that $I(R,G)$ is a free $R$-module with $\{g-1|1\ne g\in G\}$ as 
a basis. Let $\frak T$ denote the set of non-unit ideals of $RG$, and let 
$\frak S$ denote the set of subgroups of $G$. We have the following two maps 
between $\frak T$ and $\frak S$.
 \begin{equation*}
  \Phi:\frak S \longrightarrow \frak T,\quad \Phi(N)=I(R,N)RG
 \end{equation*}
and
 \begin{equation*}
  \Psi:\frak T \longrightarrow \frak S, \quad \Psi(J)=G\cap(1+J).
 \end{equation*}
We know that 
 \begin{equation*}
  \Psi\circ\Phi=\text{identity}
 \end{equation*}
(see [2, Chap.2, Corollary 2.11]). So, 
 \begin{equation*}
  \text{$\Phi$ is injective and $\Psi$ is surjective.}
 \end{equation*}
This correspondence between $\frak S$ and $\frak T$ is well-known as we
can see in literature. Generally, $\Phi(\frak S)=\{\Phi(N)|N\in \frak S\}$ is 
rather a small subset of $\frak T$ since
 \begin{equation*}
  \Phi(N)\in \Psi^{-1}(N)
 \end{equation*}
and
 \begin{equation*}
  \frak T=\bigcup_{N\in \frak S}\Psi^{-1}(N) \quad (\text{disjoint union}).
 \end{equation*}
However, each element $\Phi(N)$ of $\Phi(\frak S)$ is the kernel of the $R$-algebra 
homomorphism $RG \rightarrow R(G/N)$ induced by the canonical surjection $G 
\rightarrow G/N$ (see [2, Chap.2, Proposition 2.10]). So
 \begin{equation*}
  RG/\Phi(N)\cong R(G/N),
 \end{equation*}
the quotient is a group ring over $R$ as well, that is why we take notice of this class of ideals.
We give conditions on $R$ and $G$ for $\Phi(\frak S)$ to have some distribution in $\frak T$.\par
In Section 2, first we discuss when the nilradical of $RG$ is contained in $\Phi(\frak S)$. A sufficient 
condition has been shown in [3]. It is proved to be necessary in this paper(Proposition 2.2). Next we 
also give a necessary and sufficient condition for $\Phi(\frak S)$ to contain the Jacobson radical
(Proposition 2.3). In Section 3, we consider conditions for $\Phi(\frak S)$ to include some classes of 
ideals as prime ideals, principal ideals and so on. In particular, it is shown that the set of non-unit 
principal ideals of a group ring cannot be included by $\Phi(\frak S)$ except for an almost trivial case
(Theorem 3.7). So, in Section 4, we give conditions for an element of the group ring of a cyclic group 
to generate an ideal contained in $\Phi(\frak S)$. \par
In the rest of this paper, we abbreviate $I(R,G)$ to $I(G)$ if it is not ambiguous, and 
denote the characteristic, the nilradical and the Jacobson radical of a ring $A$ by 
char $A$, $\frak N(A)$ and $\frak J(A)$, respectively.

\section{The nilradical and the Jacobson radical}%%%%%%% 2. %%%%%%%%%%%%%%%%
In this section, $R$ denotes a ring, and $G$ denotes a group. We set
 \begin{eqnarray*}
  G_p & = & \{g\in G\:|\:\text{the order of $g$ is  a power of a fixed prime number $p$}\}, \\
  {\rm supp}\:G & = & \{p\:|\:G_p\ne1\}.
 \end{eqnarray*}
Here two conditions $ \frak N(RG)\in\Phi(\frak S)$ 
and $ \frak J(RG)\in\Phi(\frak S)$ are investigated. \par
By definition of augmentation ideals, $\Phi(1)=I(1)RG=0$. We know that $\frak N(RG)=\Phi(1)$, 
namely $RG$ is reduced if and only if $R$ is reduced and $p$ is not a zero divisor of $R$ 
for all $p\in{\rm supp}\:G$ ([2, Chap.3, Corollary 4.3]). So we exclusively discuss the case 
$\frak N(RG)=\Phi(N)$ for a subgroup $N\ne 1$, and use the following proposition repeatedly.

\begin{prop}[{[2, Chap.3, Proposition 3.5]},{[1]}]\label{Proposition 2.1}% Proposition 2.1
Let $R$ be a ring, and let $G$ be a group. Then $I(G)\subseteq \frak N(RG)$ if and only if 
there exists  a prime number $p$ such that $G$ is a $p$-group and $p\in \frak N(R)$.
\end{prop}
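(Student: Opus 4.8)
The plan rests on one reformulation: since $I(G)$ is generated as an ideal of $RG$ by $\{g-1:g\in G\}$ and $\mathfrak N(RG)$ is an ideal, the inclusion $I(G)\subseteq\mathfrak N(RG)$ holds if and only if $g-1$ is nilpotent in $RG$ for every $g\in G$. I will use this condition throughout, and I will dispose of the degenerate cases $R=0$ and $G=1$ separately (they are excluded under the conventions in force, consistent with the surrounding requirement $N\ne1$); so from now on assume $R\ne0$ and $G\ne1$.

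For ($\Leftarrow$), suppose $G$ is a $p$-group and $p^{\ell}=0$ in $R$. The only computation needed is the Frobenius congruence $(X-1)^{p^{a}}\equiv X^{p^{a}}-1\pmod p$ in $\mathbb{Z}[X]$: specializing $X\mapsto g$ for $g\in G$ of order $p^{a}$ and using $g^{p^{a}}=1$ gives $(g-1)^{p^{a}}\in pRG$, whence $(g-1)^{p^{a}\ell}\in p^{\ell}RG=0$. Thus each generator $g-1$ of $I(G)$ is nilpotent, so $I(G)\subseteq\mathfrak N(RG)$.

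The substance lies in ($\Rightarrow$), and the first move is to push the hypothesis down to coefficient fields. For each prime ideal $\mathfrak p$ of $R$ set $k_{\mathfrak p}=\operatorname{Frac}(R/\mathfrak p)$; the composite $R\to R/\mathfrak p\hookrightarrow k_{\mathfrak p}$ induces a ring map $RG\to k_{\mathfrak p}G$ carrying nilpotents to nilpotents and sending $I(G)$ into $I(k_{\mathfrak p},G)$, so $I(k_{\mathfrak p},G)\subseteq\mathfrak N(k_{\mathfrak p}G)$ for every such $\mathfrak p$. Next I would settle the field case. Fix a field $k$ and $g\in G$; the group ring $k\langle g\rangle$ of the cyclic subgroup $\langle g\rangle$ is a domain when $g$ has infinite order (so $g-1\ne0$ is non-nilpotent, ruling that case out) and equals $k[X]/(X^{m}-1)$ when $g$ has finite order $m$, and in the latter ring the image of $X-1$ is nilpotent precisely when $X^{m}-1\mid(X-1)^{n}$ for some $n$. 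Since $X-1$ is irreducible in the UFD $k[X]$, this forces $X^{m}-1=(X-1)^{m}$; comparing the coefficient of $X^{m-1}$ gives $m\cdot1_{k}=0$, so $\operatorname{char}k$ is a prime $\ell$ dividing $m$, and then writing $m=\ell^{a}m'$ with $\ell\nmid m'$ and using $X^{m}-1=(X^{m'}-1)^{\ell^{a}}$, the left side has $m'$ distinct roots over $\overline{k}$ while $(X-1)^{m}$ has exactly one, forcing $m'=1$ and $m=\ell^{a}$. Hence, provided $G\ne1$, the hypothesis $I(k,G)\subseteq\mathfrak N(kG)$ over a field $k$ is equivalent to: $\operatorname{char}k>0$ and $G$ is a $(\operatorname{char}k)$-group.

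Finally I would globalize over $\operatorname{Spec}R$. Applying the field case to each $k_{\mathfrak p}$ shows that $\operatorname{char}k_{\mathfrak p}$ is a prime $\ell_{\mathfrak p}$ and $G$ is an $\ell_{\mathfrak p}$-group; since a nontrivial group is an $\ell$-group for at most one prime $\ell$, all the $\ell_{\mathfrak p}$ coincide with a single prime $p$, so $G$ is a $p$-group and $\operatorname{char}(R/\mathfrak p)=p$ for every prime $\mathfrak p$, i.e.\ $p\in\bigcap_{\mathfrak p}\mathfrak p=\mathfrak N(R)$. The step I expect to be the main obstacle is exactly this consolidation: nilpotency of $g-1$ in $RG$ is detected only residue field by residue field, so one must both extract from each residue field that element orders are powers of that field's characteristic --- what the ``one distinct root'' count delivers --- and rule out the characteristic varying with $\mathfrak p$, where the uniqueness of the prime attached to a nontrivial $p$-group enters; together these turn the local, field-wise data into the sharp global statement.
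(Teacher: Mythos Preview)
The paper does not supply its own proof of this proposition; it is quoted verbatim from the references (Connell~[1] and Karpilovsky~[2, Chap.~3, Proposition~3.5]) and then used as a black box in the proofs of Propositions~2.2 and~2.3. So there is nothing in the paper to compare your argument against.

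That said, your proof is correct and self-contained. The direction ($\Leftarrow$) is the standard Frobenius computation. For ($\Rightarrow$), your reduction to residue fields is clean: nilpotency of $g-1$ in $RG$ survives under any ring map $RG\to kG$, and over a field the cyclic subring $k\langle g\rangle\cong k[X]/(X^m-1)$ lets you read off that $g-1$ is nilpotent exactly when $X^m-1=(X-1)^m$, which forces $m$ to be a power of $\operatorname{char}k$. The globalization is also sound: a nontrivial abelian group cannot be a $p$-group for two distinct primes, so the residue characteristics $\operatorname{char}k_{\mathfrak p}$ all coincide with a single prime $p$, and $p\in\bigcap_{\mathfrak p}\mathfrak p=\mathfrak N(R)$.

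One small remark on your caveat about $G=1$: you are right that the equivalence, as literally stated, fails there (the left side holds trivially while the right side demands some prime lie in $\mathfrak N(R)$). The paper never confronts this because it only invokes the proposition for nontrivial subgroups $N\ne1$, so the edge case is harmless in context.
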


Then we have

\begin{prop}\label{Proposition 2.2}% Proposition 2.2
For group rings $RG$ the following conditions are equivalent.\\
$(1)$ $\frak N(RG)=\Phi(N)$ for a subgroup $N\ne 1$.\\
$(2)$ $R$ is reduced and ${\rm char}\:R=p$ for some $p\in{\rm supp}\:G$.\\
Moreover, if these conditions are satisfied, $\frak N(RG)=\Phi(G_p)$.
\end{prop}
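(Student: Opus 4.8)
The plan is to run the cycle $(2)\Rightarrow(1)$ (together with the final assertion) and $(1)\Rightarrow(2)$, using Proposition~\ref{Proposition 2.1} and the reducedness criterion [2, Chap.3, Corollary 4.3] recalled just before the statement. Throughout I assume $R\ne 0$ (for the zero ring $(1)$ can hold while $(2)$ fails, so this case must be excluded, or is excluded by the convention $1\ne 0$).

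For $(2)\Rightarrow(1)$ I would take $N=G_p$: since $p\in{\rm supp}\,G$ we have $G_p\ne1$, so this is a legitimate nontrivial subgroup, and the choice will also yield the ``moreover'' clause. I would establish the two inclusions separately. For $\Phi(G_p)\subseteq\frak N(RG)$: as ${\rm char}\,R=p$, every $g\in G_p$, say of order $p^k$, satisfies $(g-1)^{p^k}=g^{p^k}-1=0$ (the iterated ``freshman's dream'' in characteristic $p$), so each basis element $g-1$ of $I(G_p)$ is nilpotent; since $\frak N(RG)$ is an ideal, $I(G_p)\subseteq\frak N(RG)$, hence $\Phi(G_p)=I(G_p)RG\subseteq\frak N(RG)$. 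For the reverse inclusion I would show that $RG/\Phi(G_p)\cong R(G/G_p)$ is reduced, which forces $\frak N(RG)\subseteq\Phi(G_p)$ because a radical ideal contains the nilradical. Reducedness follows from the cited criterion once its hypotheses are checked: $R$ is reduced by assumption; $G/G_p$ has trivial $p$-component (if $g^{p^k}\in G_p$ then $g\in G_p$), so every prime $q\in{\rm supp}(G/G_p)$ is different from $p$; and since ${\rm char}\,R=p$ makes $R$ an $\mathbb{F}_p$-algebra, such a $q$ is a unit of $R$, in particular not a zero divisor.

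For $(1)\Rightarrow(2)$, assume $\frak N(RG)=\Phi(N)=I(N)RG$ with $N\ne1$. First, $RG/\frak N(RG)=RG/\Phi(N)\cong R(G/N)$ is reduced, so the criterion gives that $R$ is reduced, which is half of $(2)$. Next, $I(N)\subseteq\frak N(RG)\cap RN\subseteq\frak N(RN)$, so Proposition~\ref{Proposition 2.1} applied to the group $N$ produces a prime $p$ with $N$ a $p$-group and $p\in\frak N(R)$. Since $R$ is reduced, $\frak N(R)=0$, hence $p=0$ in $R$, i.e.\ ${\rm char}\,R$ divides $p$; as $R\ne0$ and $p$ is prime, ${\rm char}\,R=p$. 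Finally $1\ne N$ is a $p$-group, so $N\le G_p$ and thus $G_p\ne1$, i.e.\ $p\in{\rm supp}\,G$; this proves $(2)$. The computation of the previous paragraph then gives $\frak N(RG)=\Phi(G_p)$ (equivalently $N=G_p$, since $\Phi$ is injective).

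The nilpotency of the $g-1$ and the bookkeeping with supports are routine. The one step needing real care is the reverse inclusion in $(2)\Rightarrow(1)$: verifying all hypotheses of the reducedness criterion for $R(G/G_p)$, in particular that the primes surviving in ${\rm supp}(G/G_p)$ become units — hence non-zero-divisors — precisely because ${\rm char}\,R=p$. This is the point where the characteristic hypothesis, rather than merely ``$p$ is not a zero divisor of $R$'', is what is actually used.
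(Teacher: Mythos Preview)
Your argument is correct and follows the same route as the paper: both directions hinge on Proposition~\ref{Proposition 2.1} and on the reducedness criterion applied to $R(G/G_p)$ via $RG/\Phi(G_p)\cong R(G/G_p)$. The only cosmetic differences are that you compute $(g-1)^{p^k}=0$ directly where the paper invokes Proposition~\ref{Proposition 2.1}, and you spell out the verification of the reducedness hypotheses for $R(G/G_p)$ which the paper leaves implicit.
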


\begin{proof}
(1)$\Rightarrow$(2). We note $I(N)\subseteq \frak N(RN)$ since $I(N)\subseteq 
\Phi(N)=\frak N(RG)$. By Proposition \ref{Proposition 2.1}, there exists a prime 
number $p$ such that $N$ is a $p$-group and $p\in \frak N(R)$. So $N\subseteq G_p$, 
accordingly $p\in{\rm supp}\:G$. Moreover, from the isomorphism
 \begin{equation*}
  RG/\frak N(RG)=RG/\Phi(N)\cong R(G/N),
 \end{equation*}
we can say $R(G/N)$ is reduced and so is $R$. Taking notice of $p\in\frak N(R)$, 
we have ${\rm char}\:R=p$. Now we can show $\frak N(RG)=\Phi(G_p)$ as follows :
the inclusion $N\subseteq G_p$ implies $\frak N(RG)=\Phi(N)\subseteq \Phi(G_p)$, 
and $\Phi(G_p)=I(G_p)RG\subseteq \frak N(RG)$ since $I(G_p)\subseteq \frak N(RG_p)$ 
by Proposition \ref{Proposition 2.1}.\par
(2)$\Rightarrow$(1).([3, Lemma 2.3]) Using the isomorphism
 \begin{equation*}
  RG/\Phi(G_p)\cong R(G/G_p)
 \end{equation*}
with a necessary and sufficient condition for a group ring to be reduced, which is mentioned 
at the top of this section, we can say that $RG/\Phi(G_p)$ is reduced. So, $\frak N(RG)\subseteq
\Phi(G_p)$. By Proposition \ref{Proposition 2.1} it holds that $I(G_p)\subseteq
\frak N(RG_p)$. Thus $\Phi(G_p)=I(G_p)RG\subseteq \frak N(RG)$, and $\frak N(RG)=\Phi(G_p)$. 
This completes the proof.
\end{proof}

Like the case of $\frak N(RG)$, we know a necessary and sufficient condition for the equality 
$\frak J(RG)=\Phi(1)$ (namely, $=0$) to hold (see [2, Chap.3, Corollary 4.6]). Hence it is enough 
to consider the case $\frak J(RG)=\Phi(N)$ for a subgroup $N\ne 1$. In addition, we may assume 
$G$ is torsion since $\frak J(RG)=\frak N(RG)$ if $G$ is not torsion ([2, Chap.3, Corollary 4.7]). 
The following is crucial for the rest of this section : if $G$ is torsion, then
 \begin{equation*}\tag{2.1}
  \frak J(RG)=\frak J(R)G+\{r(g-1)\:|\:g\in G_p,\: r\in \frak J(R):_R p\;\; \text{for some 
  $p\in{\rm supp}\:G$}\}RG
 \end{equation*}
([2, Chap.3, Theorem 4.5]). So we have

\begin{prop}\label{Proposition 2.3}% Proposition 2.3
If $G$ is torsion, the following conditions are equivalent.\\
$(1)$ $\frak J(RG)=\Phi(N)$ for a subgroup $N\ne 1$.\\
$(2)$ $\frak J(R)=0$ and ${\rm char}\:R=p$ for some $p\in{\rm supp}\:G$.\\
Moreover, if these conditions are satisfied, $\frak J(RG)=\Phi(G_p)$.
\end{prop}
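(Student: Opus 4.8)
The plan is to mirror the structure of the proof of Proposition \ref{Proposition 2.2}, using formula (2.1) as the analogue of Proposition \ref{Proposition 2.1} and exploiting that $\frak J$ is a sharper invariant than $\frak N$.

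For (1)$\Rightarrow$(2), suppose $\frak J(RG)=\Phi(N)$ with $N\ne1$. Since $\frak N(RG)\subseteq\frak J(RG)=\Phi(N)$, and since $\Phi$ is injective with $\Psi\circ\Phi$ the identity, I would first argue that $\Phi(N)$ being a radical ideal forces $\frak N(RG)=\Phi(M)$ for some subgroup $M$ — but more directly, from $\frak J(RG)=\Phi(N)$ and the isomorphism $RG/\Phi(N)\cong R(G/N)$, the ring $R(G/N)$ is $\frak J$-semisimple (Jacobson radical zero), hence so is its subring, so $\frak J(R)=0$. Next, to pin down the characteristic: contract $\frak J(RG)=\Phi(N)$ to $RN$. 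Since $I(N)\subseteq\Phi(N)=\frak J(RG)$, intersecting with $RN$ gives $I(N)\subseteq\frak J(RG)\cap RN\subseteq\frak J(RN)$; applying (2.1) to the torsion group $N$ and the fact that $\frak J(R)=0$, the right side of (2.1) for $RN$ becomes $\{r(g-1)\mid g\in N_p,\ r\in 0:_R p\}RN$, which forces every $g-1$ with $1\ne g\in N$ to lie in this set. Choosing $g$ of prime order $q$, we need $1\in 0:_R q$ for that prime, i.e. $q=0$ in $R$; since $\frak J(R)=0$ and $R$ is then reduced, ${\rm char}\,R$ must be exactly that prime $p$, and $N$ must be a $p$-group, so $N\subseteq G_p$ and $p\in{\rm supp}\,G$.

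For (2)$\Rightarrow$(1) and the final equality $\frak J(RG)=\Phi(G_p)$: assuming $\frak J(R)=0$ and ${\rm char}\,R=p\in{\rm supp}\,G$, I would compute both sides. From $RG/\Phi(G_p)\cong R(G/G_p)$ and the fact that $G/G_p$ has no $p$-torsion (together with $\frak J(R)=0$ and $p$ a non-zero-divisor on $R$ — indeed $p=0$ here so this needs care, but $G/G_p$ being $p'$-torsion is what matters), formula (2.1) applied to $R(G/G_p)$ shows $\frak J(R(G/G_p))=0$, hence $\frak J(RG)\subseteq\Phi(G_p)$. For the reverse inclusion, $\Phi(G_p)=I(G_p)RG$ is generated by elements $g-1$ with $g\in G_p$; since ${\rm char}\,R=p$ we have $p\in\frak J(R):_R p$ trivially (as $p=0$), so $1\cdot(g-1)$ lies in the generating set on the right of (2.1), giving $g-1\in\frak J(RG)$ and thus $\Phi(G_p)\subseteq\frak J(RG)$. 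Combining, $\frak J(RG)=\Phi(G_p)$, which in particular is $\Phi(N)$ for the subgroup $N=G_p\ne1$.

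The main obstacle I anticipate is the careful bookkeeping in applying (2.1): the set $\frak J(R):_R p$ appearing there must be correctly identified in the two regimes (over $R$ with $\frak J(R)=0$ but $p$ possibly a zero-divisor, versus after reducing characteristic to $p$ where $p=0$ so the colon ideal is all of $R$), and one must verify that the ``for some $p\in{\rm supp}\,G$'' quantifier in (2.1) interacts correctly with the single prime dictated by ${\rm char}\,R$. A secondary subtlety is justifying that $\frak J$ of a subring is controlled by $\frak J$ of the ring in the step $\frak J(RG)\cap RN\subseteq\frak J(RN)$; this is standard but should be invoked explicitly. Once these are handled, the argument is essentially parallel to Proposition \ref{Proposition 2.2}.
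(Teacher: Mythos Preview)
Your overall architecture is sound and in places differs interestingly from the paper's: for (2)$\Rightarrow$(1) the paper simply observes $\frak J(RG)=\frak N(RG)$ (by \cite{Kar}, Chap.~3, Cor.~4.7) and quotes Proposition~\ref{Proposition 2.2}, whereas you compute both inclusions directly from (2.1); your route is longer but self-contained. For (1)$\Rightarrow$(2) the paper first obtains $I(N)\subseteq\frak J(RN)$ by integrality, then invokes Connell's result to get that $N$ is a $p$-group with $p\in\frak J(R)$, and only afterwards proves $\frak J(R)=0$ by a case split on $|{\rm supp}\,G|$; you instead try to get $\frak J(R)=0$ first and then read off the prime from (2.1). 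That reordering can be made to work, but two steps as written are genuine gaps.

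First, the sentence ``$R(G/N)$ is $\frak J$-semisimple, hence so is its subring, so $\frak J(R)=0$'' is false as a general principle: a subring of a Jacobson-semisimple ring need not be Jacobson-semisimple (e.g.\ $\mathbb Z_{(p)}\subset\mathbb Q$). What saves you here is that $G/N$ is torsion, so $R(G/N)$ is integral over $R$ and lying-over gives $\frak J(R)=\frak J(R(G/N))\cap R$; equivalently you can cite \cite{Kar}, Chap.~3, Lemma~4.4. You do flag the analogous issue for $RN\subset RG$ later, but not for this earlier step. Second, from $g-1\in\frak J(RN)$ with $g$ of prime order $q$ you assert ``we need $1\in 0:_R q$''. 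Membership in an ideal is not membership in a generating set, so this needs an argument: project along $RN\to RN_q$ (kill the $q'$-primary parts of $N$ for $q'\ne q$), under which (2.1) with $\frak J(R)=0$ sends $\frak J(RN)$ into $(0:_R q)I(N_q)$; since $I(N_q)$ is $R$-free on $\{h-1:1\ne h\in N_q\}$, comparing the coefficient of $g-1$ gives $1\in 0:_R q$. With these two repairs your proof goes through.
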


\begin{proof}
(1)$\Rightarrow$(2). Note that $RG$ is integral over $RN$. Hence for any maximal 
ideal $m$ of $RN$, there exists a maximal ideal $M$ of $RG$ such that $M\cap RN=m$. 
By definition of $\Phi$ and condition $(1)$, $I(N)\subseteq \Phi(N)\subseteq M$. Thus 
$I(N)\subseteq m$. Accordingly $I(N)\subseteq \frak J(RN)$. It follows that $N$ is a $p$-group 
and $p\in \frak J(R)$ for a prime $p$ by [1, Proposition 15 (i)], and that $p\in{\rm supp}\:G$. 
By virtue of (2.1), if $g\in G_p$, then $g-1\in \frak J(RG)$. Hence $G_p\subseteq 1+\frak J(RG)$, 
and
 \begin{equation*}
  G_p\subseteq G\cap(1+\frak J(RG))=\Psi(\frak J(RG))=\Psi(\Phi(N))=N.
 \end{equation*}
So we have $G_p=N$ and $\frak J(RG)=\Phi(G_p)$. Now suppose that there exists a prime 
$q$ distinct from $p$ in ${\rm supp}\:G$. We know that $q$ cannot be contained in any 
maximal ideal of $R$ since $p\in \frak J(R)$. Thus $\frak J(R):_R q=\frak J(R)$. Let $g$ be 
an element of $G$ whose order is $q$. Let $r\in \frak J(R)$. Then by (2.1)
 \begin{equation*}
  r(g-1)\in \frak J(RG)=\Phi(G_p)
 \end{equation*}
and we have the following correspondence
 \begin{eqnarray*}
  RG/\Phi(G_p) & \cong & R(G/G_p) \\
  0=\overline{r(g-1)} & \mapsto & r\bar{g}-r\bar{1}=0 
 \end{eqnarray*}

\noindent where $\overline{r(g-1)}=r(g-1)\:{\rm mod}\:\Phi(G_p)$, $\bar{g}=gG_p$, and $\bar{1}=G_p$. 
Thus $r=0$. Therefore $\frak J(R)=0$ and ${\rm char}\:R=p$. Next suppose 
${\rm supp}\:G=\{p\}$. Since $G=G_p$, $\frak J(RG)=\Phi(G)=I(G)$. We know that 
$\frak J(R)=\frak J(RG)\cap R$ ([2, Chap.3, Lemma 4.4]). Hence $\frak J(R)=0$ and 
${\rm char}\:R=p$.\par
(2)$\Rightarrow$(1). Use Lemma 2.4.
\end{proof}

\begin{lem}\label{Lemma 2.4}% Lemma 2.4
Let $G$ be a group not necessarily torsion. If $\frak J(R)=0$ and ${\rm char}\:R=p$ for some 
$p\in {\rm supp}\:G$, then $\frak J(RG)=\Phi(N)$ for a subgroup $N\ne 1$.
\end{lem}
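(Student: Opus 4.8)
The plan is to reduce everything to the torsion case and then apply formula (2.1). First I would observe that by [2, Chap.3, Corollary 4.7] we have $\frak J(RG)=\frak N(RG)$ when $G$ is not torsion, so in that situation Proposition \ref{Proposition 2.2}, whose hypothesis (2) is exactly ``$R$ reduced and $\mathrm{char}\,R=p$'' — a weaker assumption than $\frak J(R)=0$ together with $\mathrm{char}\,R=p$, since $\frak J(R)=0$ forces $R$ reduced — already yields $\frak J(RG)=\frak N(RG)=\Phi(G_p)$ with $G_p\ne1$. This disposes of the non-torsion case, and along the way it forces the candidate subgroup to be $N=G_p$, matching the ``Moreover'' clause of Proposition \ref{Proposition 2.3}.

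Next I would treat the genuinely new case, $G$ torsion, by writing down (2.1) and simplifying it under the hypotheses. Since $\frak J(R)=0$, the term $\frak J(R)G$ vanishes. For the second summand, I claim $\frak J(R):_R p=R$ whenever $p\in\mathrm{supp}\,G$ and $\mathrm{char}\,R=p$: indeed $p\cdot 1=0$ in $R$, so $pr=0\in\frak J(R)$ for every $r\in R$. Hence the generating set $\{r(g-1)\mid g\in G_p,\ r\in\frak J(R):_R p\ \text{for some }p\}$ becomes $\{r(g-1)\mid g\in G_p,\ r\in R\}$, whose $RG$-span is precisely $I(G_p)RG=\Phi(G_p)$. (One should be slightly careful that ``for some $p\in\mathrm{supp}\,G$'' ranges over all such primes, but the hypothesis only supplies one $p$ with $\mathrm{char}\,R=p$; for any other prime $q\in\mathrm{supp}\,G$, having $\mathrm{char}\,R=p$ with $p\ne q$ makes $q$ a unit in $R$, so $G_q$ contributes nothing new to $\frak J(RG)$ beyond what is forced, and in fact this is consistent with the conclusion $N=G_p$.) Thus (2.1) collapses to $\frak J(RG)=\Phi(G_p)$, and since $p\in\mathrm{supp}\,G$ means $G_p\ne1$, we may take $N=G_p\ne1$.

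Finally I would note that this lemma is stated for $G$ ``not necessarily torsion'' precisely so that it can be invoked as the implication (2)$\Rightarrow$(1) in Proposition \ref{Proposition 2.3} (which assumes $G$ torsion) as well as standing on its own. The main obstacle, such as it is, is the bookkeeping around the index set ``for some $p\in\mathrm{supp}\,G$'' in (2.1): one must verify that the presence of primes $q\ne p$ in $\mathrm{supp}\,G$ does not enlarge $\frak J(RG)$ beyond $\Phi(G_p)$, which follows because $\mathrm{char}\,R=p$ makes every such $q$ invertible in $R$ and hence $\frak J(R):_R q=\frak J(R)=0$. Everything else is a direct substitution into the cited formula (2.1) together with the elementary identity $I(G_p)RG=\Phi(G_p)$.
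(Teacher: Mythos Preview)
Your argument is correct. The paper's own proof is shorter and does not split into cases: it invokes [2, Chap.~3, Corollary~4.7] once, for arbitrary $G$, to conclude $\frak J(RG)=\frak N(RG)$ directly from the hypothesis $\frak J(R)=0$ (that corollary covers more than the ``$G$ not torsion'' clause quoted just before Proposition~\ref{Proposition 2.3}), and then applies Proposition~\ref{Proposition 2.2}. You handle non-torsion $G$ the same way but treat torsion $G$ by a hands-on simplification of formula~(2.1), checking that the term $\frak J(R)G$ vanishes, that $\frak J(R):_R p=R$, and that primes $q\ne p$ in ${\rm supp}\,G$ contribute nothing because such $q$ become units when ${\rm char}\,R=p$. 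Both routes end at $\frak J(RG)=\Phi(G_p)$; the paper's is a two-line reduction once one knows the full strength of Corollary~4.7, while yours trades that external input for a direct computation using only material already displayed in the paper.
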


\begin{proof}
By [2, Chap.3, Corollary 4.7], we see $\frak J(RG)=\frak N(RG)$. Apply Proposition 
\ref{Proposition 2.2}.
\end{proof}

The converse of Lemma \ref{Lemma 2.4} does not hold in general. Using a group $G$, 
which is not torsion, we can show a counterexample to the converse of Lemma \ref{Lemma 2.4} 
as follows. Let $F$ be a field of characteristic $p>0$ and let $F[x]$ be the polynomial ring in 
one variable over $F$. Denote by $F[x]_{(x)}$ the localization of $F[x]$ at a prime ideal 
$xF[x]$. We define $R=F[x]_{(x)}\times F[x]_{(x)}$ and $G=C_{\infty}\times C_p$, where 
$C_{\infty}$ and $C_p$ denote an infinite cyclic group and a cyclic group of order $p$ 
respectively. Then $\frak J(RG)=\frak N(RG)$ ([2, Chap.3, Corollary 4.7]), $p\in{\rm supp}\:G$, 
${\rm char}\:R=p$ and $\frak N(R)=0$. We see $\frak J(RG)=\Phi(G_p)$ by Proposition 
\ref{Proposition 2.2}. However clearly $\frak J(R)\ne 0$.

\section{Prime ideals and principal ideals}%%%%%%%%% 3. %%%%%%%%%%%%%%%%%%%

\begin{lem}\label{Lemma 3.1}% Lemma 3.1
Let $R$ be a ring, and let $G$ be a group. If the maximal ideals of $RG$ are
images of $\Phi$, then there exists a prime number $p$ such that $R$ is
a field of characteristic $p$ and $G$ is a $p$-group.
\end{lem}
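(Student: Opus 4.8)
The plan is to extract the conclusion by feeding maximal ideals into the hypothesis one class at a time, getting a constraint on $R$ and then a constraint on $G$. First I would pick an arbitrary maximal ideal $m$ of $R$ and consider the kernel $M$ of the composite $R$-algebra map $RG \to R \to R/m$, where $RG \to R$ is the augmentation. Since $R/m$ is a field, $M$ is a maximal ideal of $RG$, so by hypothesis $M = \Phi(N)$ for some subgroup $N$. But $M$ contains $I(G) = I(R,G)$ (everything in the augmentation ideal dies under augmentation), and also $M \cap R = m$. From $\Psi\circ\Phi = \mathrm{id}$ we get $N = \Psi(\Phi(N)) = \Psi(M) = G\cap(1+M)$; since $M \supseteq I(G)$, every $g - 1 \in M$, so $G \subseteq 1 + M$ and hence $N = G$. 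Therefore $M = \Phi(G) = I(G)$. Intersecting with $R$ gives $m = M\cap R = I(G)\cap R = 0$. Since $m$ was an arbitrary maximal ideal, $R$ has a unique maximal ideal, namely $0$, i.e. $R$ is a field. (Here I am using that $I(G)\cap R = 0$, which follows from $I(G)$ being a free $R$-module on $\{g-1 : 1\neq g\in G\}$, a basis disjoint from the copy of $R$.)

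Next I would nail down the characteristic and the structure of $G$. Now that $R$ is a field, $I(G) = \Phi(G)$ is itself a maximal ideal of $RG$ (its quotient is $R$), so in particular $RG$ has $I(G)$ as a maximal ideal; more to the point, \emph{every} maximal ideal of $RG$ equals $\Phi(N)$ for some $N$, and I want to force $G$ to be a $p$-group. The natural tool is Proposition 2.3 (or directly (2.1)): I would like to show $I(G) \subseteq \frak J(RG)$, for then by Proposition~\ref{Proposition 2.1} applied to $\frak J$ via the standard fact $I(G)\subseteq\frak J(RG)$ implies $G$ is a $p$-group with $p\in\frak J(R)=0$... but wait, over a field $\frak J(R) = 0$, so I cannot have $I(G)\subseteq\frak J(RG)$ unless $I(G)$ itself is contained in every maximal ideal. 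The cleaner route: take any maximal ideal $M$ of $RG$; by hypothesis $M = \Phi(N) = I(R,N)RG$. Since $RG$ is integral (indeed free) over $RN$, $M\cap RN$ is a maximal ideal of $RN$ containing $I(R,N)$, so $I(R,N)\subseteq \frak J(RN)$, and Proposition~\ref{Proposition 2.1}'s Jacobson analogue (or [1, Proposition 15(i)], already cited in the proof of Proposition~\ref{Proposition 2.3}) forces $N$ to be a $q$-group with $q\in\frak J(R)$ — impossible over a field unless $N = 1$. Hence $M = \Phi(1) = 0$: the zero ideal is maximal, so $RG$ is a field. A group ring $RG$ over a field is a field only if $G = 1$ — but that contradicts $\mathrm{supp}\,G$ being non-empty, which we need. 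So I must be more careful: the hypothesis is that maximal ideals are \emph{among} the images of $\Phi$, and $\Phi$ is injective with $\Phi(1)=0$, so the real content is that $0$ need not be maximal; rather, each maximal $M$ satisfies $M = \Phi(N_M)$ with possibly $N_M\neq 1$.

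So I would instead argue as follows. Fix a prime $p \in \mathrm{supp}\,G$ (if $G = 1$ the statement is vacuous/trivial, so assume $G\neq1$; actually the conclusion names a prime so we should treat $G=1$ separately or note $\mathrm{supp}$ is nonempty when $G$ is a nontrivial torsion group — for general $G$ one reduces to showing $G$ is torsion first). Pick $g\in G$ of order $q$ for a prime $q$, form a maximal ideal $M$ of $RG$ (exists since $RG\neq 0$), write $M = \Phi(N)$. As above $I(R,N)\subseteq M\cap RN \in \mathrm{Max}(RN)$, so $I(R,N)\subseteq\frak J(RN)$, so by Proposition~\ref{Proposition 2.1} (Jacobson version, = [1, Prop.~15(i)]) $N$ is a $p$-group for a single prime $p$ with $p\in\frak J(R)$; since $R$ is a field this reads $p = 0$ in $R$, i.e. $\mathrm{char}\,R = p$. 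Then $R/m \cong R$ forces... and crucially, running this over \emph{all} maximal ideals $M$, each gives its own prime $p_M$ with $\mathrm{char}\,R = p_M$; since $\mathrm{char}\,R$ is a single number, all $p_M$ coincide with one prime $p$. Finally, to see $G$ is a $p$-group: if some $g\in G$ had order divisible by a prime $q\neq p$, I would produce a maximal ideal of $RG$ containing $g^{q/|{\rm part}|}-1$... concretely, $RG \to R(\langle g\rangle) \to R[C_q]$ and since $q$ is invertible in the field $R$ of characteristic $p$, $R[C_q]$ splits and has a maximal ideal $M'$ not containing $h-1$ for the generator $h$; pull back to a maximal $M$ of $RG$ with $M = \Phi(N)$, then $h\notin 1+M$ so $N$ fails to contain the $q$-torsion, yet the previous paragraph forced $N$ to be a $p$-group — consistent — the contradiction must instead come from checking $RG/\Phi(N)\cong R(G/N)$ still has residual $q$-torsion that a field quotient cannot support. \textbf{The main obstacle} is exactly this last step: cleanly ruling out primes $q\neq p$ in $\mathrm{supp}\,G$, i.e. showing that the existence of $q$-torsion (with $q$ invertible in $R$) produces a maximal ideal of $RG$ that is \emph{not} of the form $\Phi(N)$ — equivalently, that $R(G/N)$ is semisimple-with-many-factors in a way incompatible with $RG/M$ being a field for every maximal $M$. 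I expect this is handled by localizing/completing at $p$ or by the structure theorem for $R[C_q]$ over a field with $q\in R^\times$, together with the $\Psi\circ\Phi = \mathrm{id}$ bookkeeping to pin down $N$ and then force a contradiction via the isomorphism $RG/\Phi(N)\cong R(G/N)$.
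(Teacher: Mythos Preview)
Your proposal is incomplete and misses the one-line observation that collapses the whole problem: for \emph{every} subgroup $N$ one has $\Phi(N)=I(N)RG\subseteq I(G)$, simply because $I(N)$ is generated by elements $n-1$ with $n\in N\subseteq G$, all of which already lie in the ideal $I(G)$ of $RG$. Hence every maximal ideal of $RG$, being some $\Phi(N)$, sits inside $I(G)$; since $I(G)$ is proper, $I(G)$ is the unique maximal ideal. Then $R\cong RG/I(G)$ is a field, and the structure theorem for local group rings cited in the paper ([2, Chap.~3, Theorem~4.8]) immediately yields that $G$ is a $p$-group and $\mathrm{char}\,R=p$.

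Your first paragraph does correctly show that $R$ is a field, though by an unnecessarily indirect route. The real difficulty is afterward. In your third paragraph you assert that $I(R,N)\subseteq M\cap RN$ implies $I(R,N)\subseteq\frak J(RN)$; this is unjustified. You have placed $I(R,N)$ inside a \emph{single} maximal ideal of $RN$ (the contraction of your chosen $M$), not inside every maximal ideal. The superficially similar step in the proof of Proposition~\ref{Proposition 2.3} works because there a \emph{fixed} $N$ satisfies $I(N)\subseteq\Phi(N)=\frak J(RG)\subseteq M$ for \emph{all} maximal $M$ of $RG$, and going-up then reaches every maximal ideal of $RN$; in your situation $N=N_M$ varies with $M$, so that argument does not transfer. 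And as you yourself flag, the final step ruling out primes $q\neq p$ in $\mathrm{supp}\,G$ remains an unresolved obstacle. All of this evaporates once you use $\Phi(N)\subseteq I(G)$.
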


\begin{proof}
Let $P$ be a maximal ideal of $RG$. Then there exists a subgroup $N$ of $G$ such
that $P=\Phi(N)=I(N)RG$. We see $P\subseteq I(G)$ since $I(N)RG\subseteq I(G)$,
and thus $I(G)$ is the unique maximal ideal of $RG$, and $R$ is a field since 
$RG/I(G)\cong R$. By using [2, Chap.3, Theorem 4.8], we can conclude 
that $G$ is a $p$-group and the characteristic of $R$ is $p$ with some prime number $p$.
\end{proof}

From Proposition 2.1, we get the following.

\begin{lem}\label{Lemma 3.2}% Lemma 3.2
Let $R$ be a field of characteristic $p>0$, and let $G$ be a $p$-group. Then 
$I(G)$ is the unique prime ideal of $RG$.
\end{lem}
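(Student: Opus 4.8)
The plan is to show that $I(G)$ is a prime ideal and that it is the only one. For primality, I would use the isomorphism $RG/I(G)\cong R$, which holds by the defining property of the augmentation ideal; since $R$ is a field (hence an integral domain), $RG/I(G)$ is a domain, so $I(G)$ is prime. For uniqueness, I would invoke Proposition \ref{Proposition 2.1}: because $R$ is a field of characteristic $p>0$, the element $p$ is zero in $R$, hence $p\in\frak N(R)=0$ trivially (indeed $p=0$), so the hypothesis that there exists a prime $p$ with $G$ a $p$-group and $p\in\frak N(R)$ is satisfied. Therefore $I(G)\subseteq\frak N(RG)$.

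From $I(G)\subseteq\frak N(RG)$ it follows that $\frak N(RG)=I(G)$: the reverse inclusion $\frak N(RG)\subseteq I(G)$ holds because $RG/I(G)\cong R$ is a field, hence reduced, so the nilradical is contained in $I(G)$. Now recall that the nilradical of any commutative ring is the intersection of all its prime ideals, and in particular $\frak N(RG)$ is contained in every prime ideal of $RG$. Thus every prime ideal $P$ of $RG$ satisfies $I(G)=\frak N(RG)\subseteq P$. Combined with $I(G)$ being prime, and with $I(G)$ being a proper ideal (since $RG/I(G)\cong R\ne 0$), this forces $P=I(G)$ for every prime $P$: there is exactly one prime ideal, namely $I(G)$.

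I expect the main subtlety — though it is minor — to be making sure the hypotheses of Proposition \ref{Proposition 2.1} are genuinely met: one must observe that ``$p\in\frak N(R)$'' is automatic when $R$ has characteristic $p$ because $p$ is literally the zero element of $R$, regardless of whether $R$ is reduced. Everything else is a routine assembly of standard facts (augmentation quotient, nilradical as intersection of primes), so no serious obstacle arises. An alternative route would bypass Proposition \ref{Proposition 2.1} entirely and argue directly that $RG$ is a local ring with nilpotent (or at least nil) maximal ideal $I(G)$ when $G$ is a finite $p$-group, extending to arbitrary $p$-groups by a direct limit argument; but the nilradical approach above is cleaner and uses only results already available in the paper.
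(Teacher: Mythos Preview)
Your approach matches the paper's, which simply cites Proposition~\ref{Proposition 2.1}; you have correctly spelled out the details. One small slip in the last step: from $I(G)\subseteq P$ you conclude $P=I(G)$ by invoking that $I(G)$ is \emph{prime} and proper, but that is not enough (a minimal prime can sit strictly inside larger primes). What you actually need---and have already proved---is that $I(G)$ is \emph{maximal}, since $RG/I(G)\cong R$ is a field; then $I(G)\subseteq P\subsetneq RG$ forces $P=I(G)$. With ``prime'' replaced by ``maximal'' in that sentence, the argument is complete.
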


Using the preceding  lemmas, we have a corollary.

\begin{cor}\label{Corollary 3.3}% Corollary 3.3
Let $R$ be a ring, and let $G$ be a group. Then the following are equivalent.\\
$(1)$ The maximal ideals of $RG$ are contained in $\Phi(\frak S)$.\\
$(2)$ $R$ is a field of characteristic $p>0$ and $G$ is a $p$-group.\\
$(3)$ $I(G)$ is the unique prime ideal of $RG$.\\
$(4)$ The prime ideals of $RG$ are contained in $\Phi(\frak S)$.
\end{cor}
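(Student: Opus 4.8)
The plan is to prove the cycle of implications $(1)\Rightarrow(2)\Rightarrow(3)\Rightarrow(4)\Rightarrow(1)$, which closes up and yields the equivalence of all four conditions. Two of the four arrows are exactly the content of the lemmas just proved, and the other two are formal: one uses the containment $\Phi(N)\subseteq I(G)$ together with the identification $\Phi(G)=I(G)$, and the other uses only that maximal ideals are prime.

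For $(1)\Rightarrow(2)$: condition $(1)$ asserts that every maximal ideal of $RG$ lies in $\Phi(\frak S)$, i.e.\ is an image of $\Phi$. Since $RG$ is a nonzero commutative ring with identity, it possesses at least one maximal ideal, so Lemma \ref{Lemma 3.1} applies directly and produces a prime $p$ for which $R$ is a field of characteristic $p$ and $G$ is a $p$-group. For $(2)\Rightarrow(3)$: this is precisely Lemma \ref{Lemma 3.2}, which states that under $(2)$ the ideal $I(G)$ is the unique prime ideal of $RG$.

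For $(3)\Rightarrow(4)$: if $I(G)$ is the only prime ideal, then the set of prime ideals of $RG$ is the singleton $\{I(G)\}$; but $I(G)=I(R,G)RG=\Phi(G)$, since $I(R,G)$ is already an ideal of $RG$, so this singleton is contained in $\Phi(\frak S)$, which is $(4)$. For $(4)\Rightarrow(1)$: in a commutative ring every maximal ideal is prime, so the set of maximal ideals of $RG$ is a subset of the set of its prime ideals, and hence is contained in $\Phi(\frak S)$ by $(4)$; this is $(1)$.

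There is essentially no obstacle once Lemmas \ref{Lemma 3.1} and \ref{Lemma 3.2} are available: the corollary is an assembly of those results with two trivial observations. The only points deserving a moment's attention are the identification $\Phi(G)=I(G)$ in the step $(3)\Rightarrow(4)$ (so that the unique prime ideal genuinely lies in the image of $\Phi$) and the remark that $RG$ really does have maximal ideals, so that condition $(1)$ is not vacuous; both are immediate.
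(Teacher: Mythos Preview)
Your proof is correct and matches the paper's intended argument: the paper simply states that the corollary follows ``using the preceding lemmas,'' and your cycle $(1)\Rightarrow(2)\Rightarrow(3)\Rightarrow(4)\Rightarrow(1)$ via Lemmas~\ref{Lemma 3.1} and~\ref{Lemma 3.2} together with the observations $\Phi(G)=I(G)$ and ``maximal $\Rightarrow$ prime'' is exactly the assembly the paper leaves implicit.
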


\begin{lem}\label{Lemma 3.4}% Lemma 3.4
Let $R$ be a field of characteristic $p>2$, and let $G$ be a $p$-group with $G\ne 1$.
Then there exists a non-unit principal ideal of $RG$ which is not contained in $\Phi(\frak S)$.
\end{lem}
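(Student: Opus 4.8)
The plan is to exhibit an explicit generator. Since $G\ne 1$ is a $p$-group, pick $g\in G$ of order exactly $p$, put $u=g-1$ and $N=\langle g\rangle$, and take the principal ideal $u^{2}RG=(g-1)^{2}RG$. Working inside $RN$: because ${\rm char}\,R=p$ we have $g^{p}-1=(g-1)^{p}$, so $RN\cong R[u]/(u^{p})$, which is a free $R$-module with basis $1,u,\dots,u^{p-1}$. This is where the hypothesis $p>2$ is used: it forces $u^{2}\ne 0$ in $RN$, hence $u^{2}\ne 0$ in $RG$ (as $RN\hookrightarrow RG$). On the other hand $u$ is nilpotent ($u^{p}=0$), so $u^{2}$ is nilpotent, hence a non-unit; thus $u^{2}RG$ is a non-unit principal ideal. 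It remains to show $u^{2}RG\notin\Phi(\frak S)$.

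Suppose for contradiction that $u^{2}RG=\Phi(M)$ for some $M\in\frak S$. Applying $\Psi$ and using $\Psi\circ\Phi=\text{identity}$ gives $M=\Psi(u^{2}RG)=G\cap(1+u^{2}RG)$, so it will suffice to prove this subgroup is trivial: then $u^{2}RG=\Phi(1)=0$, contradicting $u^{2}\ne 0$. First, $u^{2}RG\subseteq uRG$, and $uRG=I(N)RG=\Phi(N)$ because $I(N)=(g-1)RN=uRN$; hence $G\cap(1+u^{2}RG)\subseteq G\cap(1+\Phi(N))=\Psi(\Phi(N))=N$. So it is enough to check that no nontrivial element of $N=\{1,g,\dots,g^{p-1}\}$ lies in $1+u^{2}RG$. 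For this I would use that $RG$ is free as an $RN$-module on a set of coset representatives containing $1$, which gives $u^{2}RG\cap RN=u^{2}RN$; thus $g^{k}\in 1+u^{2}RG$ with $1\le k\le p-1$ would force $g^{k}-1\in u^{2}RN$. But $g^{k}-1=(1+u)^{k}-1=ku+\binom{k}{2}u^{2}+\cdots+u^{k}$ has $u$-coordinate $k\not\equiv 0\pmod p$ relative to the basis $1,u,\dots,u^{p-1}$, whereas every element of $u^{2}RN=Ru^{2}+\cdots+Ru^{p-1}$ has $u$-coordinate $0$. This contradiction completes the proof.

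The only genuine subtlety is that $G$ may be infinite, so one cannot argue by simply enumerating the ideals of $RG$; the device that sidesteps this is to pin down the putative subgroup $M$ through $\Psi$, and then use freeness of $RG$ over $RN$ to push the membership question $g^{k}-1\in u^{2}RG$ down into the finite-dimensional ring $RN\cong R[u]/(u^{p})$, where it becomes a one-line computation of coefficients. It is also worth recording why $g$ is chosen of order exactly $p$ rather than a higher power: if $g$ had order $p^{2}$, then $g^{p}-1=(g-1)^{p}=u^{p}=u^{2}\cdot u^{p-2}\in u^{2}RG$, so $\Psi(u^{2}RG)$ would contain the nontrivial element $g^{p}$ and the collapse to $\Phi(1)$ would fail.
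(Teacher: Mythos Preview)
Your proof is correct and follows essentially the same approach as the paper: the candidate ideal is $(g-1)^{2}RG$ with $g$ of order $p$, and assuming $(g-1)^{2}RG=\Phi(K)$ one traps $K$ inside $\langle g\rangle$ and forces $K=1$, giving the contradiction $(g-1)^{2}=0$ (which is where $p>2$ enters). The paper replaces your freeness/coefficient computation by the one-line observation that $\Phi(\langle g\rangle)=(g-1)RG\supsetneq(g-1)^{2}RG=\Phi(K)$ is strict, so applying $\Psi$ and using $\Psi\circ\Phi=\mathrm{id}$ gives $\langle g\rangle\supsetneq K$ directly, whence $K=1$ since $|\langle g\rangle|=p$.
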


\begin{proof}
Let $g$ be an element of $G$ whose order is $p$, and let $H$ be the subgroup of $G$
generated by $g$. Then by definition
 \begin{equation*}
  \Phi(H)=I(H)RG=(g-1)RG.
 \end{equation*}
Here, if $(g-1)RG=(g-1)^2RG$, then $(g-1)RG=(g-1)^pRG=0$. So, we have $(g-1)RG
\supsetneqq (g-1)^2RG$. Now assume that there exists a subgroup $K$ of $G$ such that
 \begin{equation*}
  \Phi(K)=(g-1)^2RG.
 \end{equation*}
Since $\Phi(H)\supsetneqq \Phi(K)$ and $\Psi\circ\Phi=\text{identity}$ (see 1. Introduction), 
we have
 \begin{equation*}
  H=\Psi(\Phi(H))\supsetneqq \Psi(\Phi(K))=K.
 \end{equation*}
Hence $K=1$, accordingly $\Phi(K)=0$. Thus $(g-1)^2=0$, whereas $g^2-2g+1\ne 0$
because $g^2$, $g$, and $1$ are different elements of $G$ by the assumption that 
the order of $g$ is $p>2$. This contradiction shows that $(g-1)^2RG\not\in\Phi(\frak S)$.
\end{proof}

\begin{lem}\label{Lemma 3.5}% Lemma 3.5
Let $R$ be a field of characteristic $2$, and let $G$ be a $2$-group. Suppose $G$ has
an element of order $4$. Then there exists a non-unit principal ideal of $RG$ which is not 
contained in $\Phi(\frak S)$.
\end{lem}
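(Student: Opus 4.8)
The plan is to follow the proof of Lemma \ref{Lemma 3.4}, with the one adjustment forced by characteristic $2$: here the square $(g-1)^2$ is of no help, since for an element $g$ of order $4$ and $H=\langle g\rangle$ one has $(g-1)^2=g^2-1$ in characteristic $2$, so $(g-1)^2RG=I(\langle g^2\rangle)RG=\Phi(\langle g^2\rangle)$ already lies in $\Phi(\frak S)$. I would instead show that the \emph{cube} $(g-1)^3RG$ is a non-unit principal ideal not contained in $\Phi(\frak S)$.

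Fix $g\in G$ of order $4$ (which exists by hypothesis) and put $H=\langle g\rangle$, so that $\Phi(H)=I(H)RG=(g-1)RG$. First I would record the computations in $RH$ that drive the argument: since ${\rm char}\,R=2$, the Frobenius identity gives $(g-1)^4=g^4-1=0$ (as $g^4=1$), while $(g-1)^2=g^2-1\ne0$ (as $g^2\ne1$) and $(g-1)^3=(g^2-1)(g-1)=g^3+g^2+g+1\ne0$ (as $1,g,g^2,g^3$ are distinct elements of $G$). In particular $(g-1)^3RG$, being contained in the proper ideal $I(G)$, is a non-unit ideal.

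Now suppose, for contradiction, that $(g-1)^3RG=\Phi(K)$ for some subgroup $K$ of $G$. The inclusion $(g-1)RG\supseteq(g-1)^3RG$ is strict: otherwise $(g-1)=(g-1)^3\beta$ for some $\beta\in RG$, hence $(g-1)^2=(g-1)^4\beta=0$, contradicting $(g-1)^2\ne0$. So $\Phi(H)\supsetneqq\Phi(K)$, and since $\Psi\circ\Phi=\text{identity}$ this forces $H=\Psi(\Phi(H))\supsetneqq\Psi(\Phi(K))=K$; as $H$ is cyclic of order $4$, either $K=1$ or $K=\langle g^2\rangle$. If $K=1$ then $\Phi(K)=0$ gives $(g-1)^3=0$, a contradiction. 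If $K=\langle g^2\rangle$ then $\Phi(K)=I(\langle g^2\rangle)RG=(g^2-1)RG=(g-1)^2RG$, so $(g-1)^2RG=(g-1)^3RG$; writing $(g-1)^2=(g-1)^3\gamma$ and multiplying by $g-1$ yields $(g-1)^3=(g-1)^4\gamma=0$, again a contradiction. Hence no such $K$ exists, and $(g-1)^3RG\notin\Phi(\frak S)$.

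The computations are routine; the only real choice is to pass to $(g-1)^3$, one power higher than in Lemma \ref{Lemma 3.4}, and the sole place the hypothesis enters in its full strength is in verifying $(g-1)^3\ne0$, which requires an element of order $4$ rather than merely an involution.
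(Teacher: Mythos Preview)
Your proof is correct and uses the same candidate ideal $(g-1)^3RG$ as the paper. The only organizational difference is that the paper sandwiches $\Phi(K)$ strictly between $\Phi(1)=0$ and $\Phi(\langle g^2\rangle)=(g^2-1)RG$, so that applying $\Psi$ yields $1\subsetneqq K\subsetneqq\langle g^2\rangle$, an immediate contradiction since $\langle g^2\rangle$ has order $2$; you instead compare with $\Phi(\langle g\rangle)$ and then split into the two cases $K=1$ and $K=\langle g^2\rangle$, which amounts to the same thing.
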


\begin{proof}
Let $g$ be an element of $G$, and suppose the order of $g$ is $4$. Define
$I=(g-1)^3RG$. If $(g-1)^2RG=(g-1)^3RG$, then $(g-1)^3RG=(g-1)^4RG=(g^4-1)RG=0$.
Thus $(g-1)^3=0$. However
 \begin{equation*}\tag{3.1}
  (g-1)^3=g^3-3g^2+3g-1=g^3+g^2+g+1\ne 0
  \label{2.6.1}
 \end{equation*}
since the order of $g$ is $4$. Hence $(g-1)^2RG\supsetneqq (g-1)^3RG=I$. Moreover we
see that $I\ne 0$ by \eqref{2.6.1}. So, we have
 \begin{equation*}\tag{3.2}
  0\subsetneqq I\subsetneqq (g-1)^2RG=(g^2-1)RG=\Phi(H)
  \label{2.6.2}
 \end{equation*}
where $H$ denotes the subgroup of $G$ generated by $g^2$.\par
Suppose there exists a subgroup $K$ of $G$ such that $I=\Phi(K)$. Rewriting \eqref{2.6.2}, 
we have
 \begin{equation*}
  \Phi(1)\subsetneqq \Phi(K)\subsetneqq \Phi(H)
 \end{equation*}
and get
 \begin{equation*}
  1\subsetneqq K\subsetneqq H
 \end{equation*}
by mapping through $\Psi$. This is contradictory to the fact that $H$ is 
generated by $g^2$ and the order of $g$ is $4$. Therefore $I\not\in\Phi(\frak S)$.
\end{proof}

\begin{lem}\label{Lemma 3.6}% Lemma 3.6
Let $R$ be a field of characteristic $2$, and let $G$ be a $2$-group which has no element 
of order $4$. Suppose that the order of $G$ is greater than $2$. Then there exists a non-unit 
principal ideal of $RG$ which is not contained in $\Phi(\frak S)$.
\end{lem}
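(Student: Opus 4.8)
The plan is to exhibit an explicit generator. Since $G$ is a $2$-group containing no element of order $4$, every nontrivial element of $G$ has order exactly $2$, so $G$ is elementary abelian; because the order of $G$ exceeds $2$, it has two independent elements $g,h$ of order $2$ (this is precisely where both hypotheses enter). I would then take $x=(g-1)(h-1)\in RG$ and claim that $xRG$ is a non-unit principal ideal with $xRG\notin\Phi(\frak S)$.

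First I would verify that $xRG$ is a proper ideal. As ${\rm char}\,R=2$ and $g^2=h^2=1$, we have $(g-1)^2=g^2-2g+1=0$ and similarly $(h-1)^2=0$, so $x^2=0$; hence $x$ is nilpotent and cannot be a unit of the nonzero ring $RG$, so $xRG\subsetneq RG$. (Equivalently, $RG$ is local with maximal ideal $I(G)$ by Lemma~\ref{Lemma 3.2}, and $x\in I(G)$.) Next I would check $x\ne 0$: expanding gives $x=gh-g-h+1$, and $gh,g,h,1$ are four pairwise distinct elements of $G$, so $x$ is a nonzero element of the free $R$-module $RG$.

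Then, assuming for contradiction that $xRG=\Phi(K)$ for some subgroup $K$ of $G$, I would observe that $x$ lies in $(g-1)RG=\Phi(\langle g\rangle)$ and in $(h-1)RG=\Phi(\langle h\rangle)$, so $xRG\subseteq\Phi(\langle g\rangle)\cap\Phi(\langle h\rangle)$. Applying $\Psi$ — which is inclusion-preserving directly from its definition $\Psi(J)=G\cap(1+J)$ — together with $\Psi\circ\Phi={\rm identity}$ yields $K=\Psi(\Phi(K))=\Psi(xRG)\subseteq\langle g\rangle\cap\langle h\rangle=1$. Hence $K=1$ and $xRG=\Phi(1)=0$, contradicting $x\ne 0$; therefore $xRG\notin\Phi(\frak S)$, and $xRG$ is the required ideal.

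I do not anticipate a genuine obstacle here: the argument is short, and the only delicate points are confirming $x\ne 0$ via the distinctness of $gh,g,h,1$ (which forces the use of "rank at least $2$", i.e. of the hypotheses "order of $G$ greater than $2$" and "no element of order $4$"), and noting the monotonicity of $\Psi$, both of which are routine.
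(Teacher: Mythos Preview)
Your proof is correct and in fact uses the same element as the paper: in characteristic $2$ one has $(g-1)(h-1)=(1+g)(1+h)=1+g+h+gh$, which is exactly the paper's $x=1+f_1+f_2+f_1f_2$. The difference lies in how the contradiction is reached once $xRG=\Phi(K)$ is assumed. The paper argues that $\Phi(K)$ must contain some $1+g'$ with $g'\ne 1$, then shows $(1+f_i)(1+g')=0$, invokes an annihilator computation $\mathrm{ann}(1+g')=(1+g')RG$ from an external reference, and concludes $x\in(1+g')^2RG=0$. Your route is more economical: from $xRG\subseteq\Phi(\langle g\rangle)$ and $xRG\subseteq\Phi(\langle h\rangle)$ you apply the monotone $\Psi$ together with $\Psi\circ\Phi=\mathrm{id}$ to force $K\subseteq\langle g\rangle\cap\langle h\rangle=1$, hence $xRG=\Phi(1)=0$. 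This avoids the cited annihilator result entirely and stays within the $\Phi$/$\Psi$ framework set up in the introduction; it is a genuinely cleaner argument that also parallels the paper's own strategy in Lemmas~\ref{Lemma 3.4} and~\ref{Lemma 3.5}.
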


\begin{proof}
By the assumption on the order of $G$, we have two different elements $f_1, f_2 \in G\setminus 
\{1\}$ satisfying $f_1f_2\not\in \{1, f_1, f_2\}$. Define an element $x$ of $RG$ by 
$x=1+f_1+f_2+f_1f_2$. Clearly, $xRG\ne RG$ since $x\in I(G)$. Now we assume $xRG\in 
\Phi(\frak S)$. Then $xRG$ has an element $1+g$ for some $g\in G\setminus \{1\}$ by 
[2, Chap.3, Lemma 1.3]. Hence there is an element $y$ of $RG$ such that $xy=1+g$. Note that 
$x=(1+f_1)(1+f_2)$, where $(1+f_i)^2=0$ for $i=1,2$. So, we have $(1+f_i)(1+g)=(1+f_i)xy=0$ 
for $i=1,2$. Thus $1+f_1, 1+f_2\in {\rm ann}(1+g)$, where ${\rm ann}(1+g)$ denotes the annihilator 
of $(1+g)RG$. Using the fact that the order of $g$ is $2$ and [2, Chap.2, Proposition 2.18 (iv)], we get 
${\rm ann}(1+g)=(1+g)RG$. Therefore $x=(1+f_1)(1+f_2)\in (1+g)^2RG=0$, whereas $x\ne 0$. 
By this contradiction, we conclude $xRG\not\in \Phi(\frak S)$.
\end{proof}

As a result, we can show

\begin{thm}\label{Theorem 3.7}% Theorem 3.7
Let $R$ be a ring, and let $G$ be a group with $G\ne 1$. Then the following are equivalent.\\
$(1)$ $\Phi(\frak S)=\frak T$.\\
$(2)$ Every non-unit principal ideal of $RG$ is contained in $\Phi(\frak S)$.\\
$(3)$ $R$ is a field of characteristic $2$ and $G$ is a group of order $2$.
\end{thm}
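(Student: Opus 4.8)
The plan is to prove the cycle $(1)\Rightarrow(2)\Rightarrow(3)\Rightarrow(1)$. The implication $(1)\Rightarrow(2)$ is immediate, since a non-unit principal ideal is in particular a non-unit ideal and hence lies in $\frak T=\Phi(\frak S)$. For $(3)\Rightarrow(1)$: if $R$ is a field of characteristic $2$ and $G=\langle g\rangle$ has order $2$, then $RG\cong R[X]/(X^2-1)=R[X]/((X-1)^2)\cong R[y]/(y^2)$ via $y=g-1$ (using $X^2-1=(X-1)^2$ in characteristic $2$); this ring has exactly the three ideals $0\subsetneqq (y)\subsetneqq RG$, and the two proper ones are $\Phi(1)=0$ and $\Phi(G)=(g-1)RG=(y)$, whence $\frak T=\Phi(\frak S)$.

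The bulk of the work is $(2)\Rightarrow(3)$, which I would carry out in three stages. \emph{Stage 1: $R$ is a field.} If $r\in R$ is a nonzero non-unit, then $rRG$ is a non-unit principal ideal; a group element $k\ne 1$ lying in $1+rRG$ would have its coefficient at $k$ equal to $1$ and lying in $rR$, forcing $r$ to be a unit, so $\Psi(rRG)=1$. Since $rRG\ne 0=\Phi(1)$, the ideal $rRG$ is not in $\Phi(\frak S)$, contradicting $(2)$; hence $R$ is a field. \emph{Stage 2: ${\rm char}\,R=p>0$ and $G$ is a $p$-group.} Assume not. Then $G$ has an element $g\ne 1$ whose order is not a power of ${\rm char}\,R$ (in characteristic $0$, any $g\ne 1$ will do). If $g$ has finite order, then after replacing $g$ by a suitable power we obtain a finite subgroup $H$ of order $m\ge 2$ with $m$ invertible in $R$, so $e=m^{-1}\sum_{h\in H}h$ is a nontrivial idempotent of $RG$; a direct computation with coefficients shows $\Psi(eRG)=1$, while $eRG\ne 0$, so $eRG$ is a non-unit principal ideal not in $\Phi(\frak S)$, a contradiction. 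If $g$ has infinite order, I would instead use the ideal $(g+1)RG$ when ${\rm char}\,R\ne 2$ and $(g^2+g+1)RG$ when ${\rm char}\,R=2$: were such an ideal equal to some $\Phi(N)$, the image of $g+1$ (resp.\ $g^2+g+1$) in $R(G/N)\cong RG/\Phi(N)$ would vanish, and a short analysis of which of the cosets $N,gN$ (resp.\ $N,gN,g^2N$) can coincide produces a contradiction in every case; the one remaining point is that $g+1$ (resp.\ $g^2+g+1$) is not a unit, which follows by mapping a finitely generated subgroup containing $g$ onto its torsion-free quotient, a Laurent polynomial ring over $R$ whose units are monomials. \emph{Stage 3: conclusion.} Now $R$ is a field of characteristic $p>0$ and $G$ is a nontrivial $p$-group; if $(3)$ fails then either $p>2$, or $p=2$ and $G$ has an element of order $4$, or $p=2$, $|G|>2$ and $G$ has no element of order $4$, and Lemmas \ref{Lemma 3.4}, \ref{Lemma 3.5} and \ref{Lemma 3.6} respectively produce a non-unit principal ideal outside $\Phi(\frak S)$, contradicting $(2)$. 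Hence $p=2$ and $|G|=2$.

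I expect the main obstacle to be the infinite-order subcase of Stage 2: unlike the torsion case there is no averaging idempotent available, so a bad principal ideal must be produced by hand together with a proof that it is a \emph{proper} ideal, and it is precisely for this last point that the passage to a finitely generated subgroup and its torsion-free quotient is needed. A secondary technical point is that the coefficient computations — that $\Psi(eRG)=1$, and that the cosets $N,gN,g^2N$ are pairwise distinct — must be carried out carefully, since these are what force the characteristic, and then via the three lemmas the order of $G$, down to $2$.
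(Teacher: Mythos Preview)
Your argument is correct, and for $(3)\Rightarrow(1)$ and for the final Stage~3 it coincides with the paper's. The difference lies in how you reach ``$R$ is a field of characteristic $p>0$ and $G$ is a $p$-group'' from~$(2)$. You do this by hand: Stage~1 builds a bad principal ideal $rRG$ from a nonzero non-unit $r\in R$, and Stage~2 builds one from an idempotent (torsion case) or from $g+1$ or $g^2+g+1$ (infinite-order case), with an auxiliary passage to a Laurent polynomial ring to check non-unitness. The paper bypasses all of this with a single observation: since $\Phi(N)\subseteq I(G)$ for every subgroup $N$, hypothesis~$(2)$ forces every non-unit of $RG$ to lie in $I(G)$, so $I(G)$ is the unique maximal ideal; Corollary~\ref{Corollary 3.3} then gives immediately that $R$ is a field of characteristic $p>0$ and $G$ is a $p$-group. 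What the paper's route buys is brevity and the complete avoidance of the infinite-order subcase you flagged as the main obstacle; what your route buys is self-containment, since it does not invoke Corollary~\ref{Corollary 3.3} (and behind it Lemma~\ref{Lemma 3.1} and the cited local-ring criterion for group rings).
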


\begin{proof}
(2)$\Rightarrow$(3). We know $\Phi(N)\subseteq I(G)$ for any subgroup $N$ of $G$ by 
definition. So it follows from (2) that $J\subseteq I(G)$ for every non-unit principal ideal $J$ of 
$RG$. Hence if $x$ is an element of $RG$ then we have
 \begin{equation*}
  x\not\in {\rm U}(RG)\Leftrightarrow x\in I(G)
 \end{equation*}
where ${\rm U}(RG)$ denotes the unit group of $RG$. Thus $I(G)$ is the unique maximal ideal 
of $RG$. By Corollary \ref{Corollary 3.3}, we see that $R$ is a field of characteristic $p>0$ and 
$G$ is a $p$-group. By Lemma \ref{Lemma 3.4}, we get $p=2$. By Lemma \ref{Lemma 3.5}, 
we can say $G$ has no element of order 4. By Lemma \ref{Lemma 3.6}, we conclude 
that the order of $G$ is 2.\par
(3)$\Rightarrow$(1). We use the fact that $RG$ is isomorphic to
a quotient ring of the polynomial ring $R[x]$ in an indeterminate $x$ over $R$. So, 
using the isomorphism
 \begin{equation*}
  RG\cong R[x]/(x^2-1)=R[x]/((x-1)^2),
 \end{equation*}
we see $\frak T=\{0,I(G)\}$. This completes the proof.
\end{proof}

\newpage
\section{Conditions for an element of the group ring of a cyclic group to generate an ideal contained 
in $\Phi(\frak S)$}%%%% 4. %%%%%%%%%%%

As shown in Theorem \ref{Theorem 3.7}, there exist elements in a group ring each element of which generates an ideal 
not contained in $\Phi(\frak S)$ except for a particular case. So, we give conditions for an element 
of the group ring of a cyclic group to generate an ideal contained in $\Phi(\frak S)$.

\begin{lem}\label{Lemma 4.1}% Lemma 4.1
Let $R$ be a field, and let $G$ be a finite cyclic group of order $m$ with a generator $g$. Suppose $x$ is an element 
of $RG$, and write $x=\sum_{i=0}^{m-1}r_ig^i \enskip (r_i\in R)$. Let $n$ be a positive integer with $n<m$. Then, 
$g^n-1\in xRG$ if and only if ${\rm rank}(A_x)={\rm rank}(\tilde{A}_{x,n})$, where {\rm rank(\quad)} denotes the rank of 
a matrix, and matrices $A_x$, $\tilde{A}_{x,n}$ are defined as follows:\\
\begin{equation*}
A_x=\begin{pmatrix}
        r_{m-1} & r_{m-2} & \cdots & r_1 & r_0 \\
        r_{m-2} & r_{m-3} & \cdots & r_0 & r_{m-1} \\
        r_{m-3} & r_{m-4} & \cdots & r_{m-1} & r_{m-2} \\
        \vdots   & \vdots   &            & \vdots & \vdots \\
        \vdots   & \vdots   &            & \vdots & \vdots \\
        r_2       & r_1        & \cdots & r_4 & r_3 \\
        r_1       & r_0        & \cdots & r_3 & r_2 \\
        r_0       & r_{m-1} & \cdots & r_2 & r_1
        \end{pmatrix},
\end{equation*}
\vspace{0.5cm}
\begin{equation*}
\tilde{A}_{x,n}=
                       \begin{array}{cc|ccccccccccc|ccc}
                        \cline{3-13}
                        \ldelim({9}{4pt}&&&&&&&&&&&&&0&\rdelim){9}{4pt}&\rdelim\}{4}{4pt}[$m-n$]\\
                       &&&&&&&&&&&&&\vdots&&\\
                       &&&&&&&&&&&&&0&&\\
                       &&&&&&\mbox{\smash{\Large $A_x$}}&&&&&&&1&&\\
                       &&&&&&&&&&&&&0&&\\
                       &&&&&&&&&&&&&\vdots&&\\
                       &&&&&&&&&&&&&0&&\\
                       &&&&&&&&&&&&&-1&&\\
                       \cline{3-13}
                       \end{array}.
\end{equation*}
\end{lem}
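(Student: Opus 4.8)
The plan is to translate the membership condition $g^n-1 \in xRG$ into a linear-algebra statement over the field $R$. Since $G$ is cyclic of order $m$ with generator $g$, every element of $RG$ is uniquely $\sum_{i=0}^{m-1} r_i g^i$, so $RG$ is an $R$-vector space of dimension $m$ with basis $1, g, \dots, g^{m-1}$. Multiplication by $x$ is an $R$-linear endomorphism of $RG$; I would first compute its matrix with respect to this basis. Because $g\cdot g^i = g^{i+1}$ with indices read modulo $m$ (and $g^m = 1$), multiplication by $g$ is a cyclic shift, and multiplication by $x = \sum r_i g^i$ is therefore a circulant-type operator. The matrix $A_x$ as written in the statement is (a row/column reindexing of) exactly this circulant matrix: its columns are the coordinate vectors of $x, gx, g^2x, \dots, g^{m-1}x$, so the column space of $A_x$ is precisely the coordinate representation of the ideal $xRG$.

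Next I would identify the coordinate vector of $g^n - 1$. In the basis $1, g, \dots, g^{m-1}$ this is the vector with $-1$ in the position indexed by $g^0$, $+1$ in the position indexed by $g^n$, and $0$ elsewhere. With the particular ordering of rows used to write $A_x$ (the rows appear to be indexed in the order $g^{m-1}, g^{m-2}, \dots, g^1, g^0$, or some fixed such ordering consistent with the displayed entries), this coordinate vector is exactly the last column appended in $\tilde A_{x,n}$: zeros in the top $m-n$ rows, then a $1$, then zeros, then a $-1$ in the bottom row. Thus $\tilde A_{x,n}$ is the augmented matrix $[\,A_x \mid v\,]$ where $v$ is the coordinate vector of $g^n-1$.

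Then the proof reduces to the standard fact from linear algebra over a field: a vector $v$ lies in the column space of a matrix $A$ if and only if $\operatorname{rank}(A) = \operatorname{rank}([A\mid v])$ (the Rouché–Capelli / consistency criterion for the linear system $A\mathbf{y} = v$). Applying this with $A = A_x$ and $v$ the coordinate vector of $g^n-1$ gives: $g^n - 1 \in xRG$ $\iff$ the system $A_x \mathbf{y} = v$ is consistent $\iff$ $\operatorname{rank}(A_x) = \operatorname{rank}(\tilde A_{x,n})$, which is the claim. The hypothesis that $R$ is a field is used precisely here, so that rank is well-behaved and the consistency criterion holds; it is also used so that $RG$ is a genuine finite-dimensional vector space on which the column-space description of $xRG$ is valid. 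The condition $n < m$ just ensures $g^n - 1 \neq 0$ and that $n$ genuinely indexes a basis element distinct from $1$.

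The main obstacle, and the only place demanding care, is bookkeeping: one must verify that with the exact ordering of rows and columns chosen in the displayed matrix $A_x$, the product $g^j x$ really does produce the $j$-th column as written (i.e. checking the cyclic-shift pattern of the $r_i$'s and the wrap-around $g^m = 1$ matches the displayed entries), and that the appended column in $\tilde A_{x,n}$ with its $1$ in row $m-n$ and $-1$ in the last row really is the coordinate vector of $g^n - 1$ in that same ordering. Once this indexing is pinned down, the rest is the invocation of the Rouché–Capelli criterion and is routine. I would therefore structure the write-up as: (i) set up the $R$-linear isomorphism $RG \cong R^m$ and describe $xRG$ as $\operatorname{Im}(A_x)$; (ii) identify $v$; (iii) apply the consistency criterion to conclude.
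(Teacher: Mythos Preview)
Your proposal is correct and follows essentially the same approach as the paper: both translate the condition $g^n-1=xy$ into the linear system $A_x\,(s_0,\dots,s_{m-1})^T=v$ with $v$ the coordinate vector of $g^n-1$, and then invoke the Rouch\'e--Capelli consistency criterion. The paper simply writes the system of equations out explicitly rather than phrasing it via the column-space description of $xRG$, but the content is identical; your reading of the row ordering (rows indexed by $g^{m-1},g^{m-2},\dots,g^0$, so that the $1$ sits in row $m-n$ and the $-1$ in row $m$) matches the paper's conventions.
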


\begin{proof}
Suppose that $g^n-1=xy$ for an element $y$ of $RG$, and $y=\sum_{j=0}^{m-1}s_jg^j \enskip (s_j\in R)$. 
By calculating coefficients of $g^k$ ( $k=0,1, \ldots , m-1$ ) in $xy$, we have
\begin{equation}
 \begin{cases}\tag{4.1}
      r_{m-1}s_0+r_{m-2}s_1+\cdots+r_1s_{m-2}+r_0s_{m-1}=0 \\
      r_{m-2}s_0+r_{m-3}s_1+\cdots+r_0s_{m-2}+r_{m-1}s_{m-1}=0 \\
      \hdotsfor{1} \\
      r_ns_0+r_{n-1}s_1+\cdots+r_{n+2}s_{m-2}+r_{n+1}s_{m-1}=1 \\
      \hdotsfor{1} \\
      r_1s_0+r_0s_1+\cdots+r_3s_{m-2}+r_2s_{m-1}=0 \\
      r_0s_0+r_{m-1}s_1+\cdots+r_2s_{m-2}+r_1s_{m-1}=-1
 \end{cases}
\end{equation}
So, $g^n-1\in xRG$ if and only if the system (4.1) of linear equations for variables $s_0, s_1, \ldots , s_{m-1}$ 
has a solution. It is a well-known fact in linear algebra that the latter condition is equivalent to the condition 
${\rm rank}(A_x)={\rm rank}(\tilde{A}_{x,n})$.
\end{proof}

\newpage
We denote the greatest common divisor of positive integers $m$ and $n$ by $(m,n)$. And we denote the subgroup 
generated by an element $s$ of a group by $\langle s \rangle$.\par
\medskip
The following is easily shown.

\begin{lem}\label{Lemma 4.2}% Lemma 4.2
Let $G$ be a finite cyclic group of order $m$, and let $g$ be a generator of $G$. Supposing $n$ is a positive 
integer with $n<m$ and $(m,n)=d$, then $\langle g^n \rangle=\langle g^d \rangle$.
\end{lem}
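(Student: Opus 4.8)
The plan is to reduce the statement to a purely group-theoretic fact about cyclic groups. Since $G$ is cyclic of order $m$ generated by $g$, its subgroups are exactly the $\langle g^e\rangle$ for divisors $e$ of $m$, and $|\langle g^e\rangle| = m/e$. So I first want to identify precisely which subgroup $\langle g^n\rangle$ is, not assuming $n\mid m$. The standard observation is that $g^n$ and $g^d$ generate the same cyclic subgroup when $d=(m,n)$.

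\medskip
First I would show $\langle g^d\rangle \subseteq \langle g^n\rangle$ by writing $d$ as an integer linear combination $d = am + bn$ (B\'ezout), so that $g^d = g^{am+bn} = (g^m)^a (g^n)^b = (g^n)^b \in \langle g^n\rangle$, using $g^m = 1$. Conversely, since $d\mid n$, write $n = dk$; then $g^n = (g^d)^k \in \langle g^d\rangle$, giving $\langle g^n\rangle \subseteq \langle g^d\rangle$. Combining the two inclusions yields $\langle g^n\rangle = \langle g^d\rangle$, which is exactly the assertion.

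\medskip
There is essentially no obstacle here: the argument is the classical computation of the subgroup generated by a power of a generator in a finite cyclic group, and the hypothesis $n<m$ is not even needed for the equality of subgroups (it only guarantees $g^n \neq 1$ when $d<m$, which is why the statement is phrased that way to line up with Lemma~\ref{Lemma 4.1}). The only point to be a little careful about is that B\'ezout's identity produces integers $a,b$ that may be negative, so I should note that $(g^n)^b$ still makes sense in the group $G$ for $b\in\mathbb{Z}$, and that reducing exponents modulo $m$ causes no trouble since $g^m=1$. With that remark in place the proof is just the two displayed inclusions above.
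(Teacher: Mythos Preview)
Your argument is correct and is exactly the standard B\'ezout computation one would expect; the paper itself omits the proof, simply remarking that the lemma ``is easily shown.'' Your observation that the hypothesis $n<m$ is inessential for the equality is also accurate.
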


So,we can show
\begin{lem}\label{Lemma 4.3}% Lemma 4.3
Let $R$ be a field, and let $G$ be a finite cyclic group of order $m$ with a generator $g$. Suppose $x$ is 
an element of $RG$, and write $x=\sum_{i=0}^{m-1}r_ig^i \enskip (r_i\in R)$. Let $n$ be a positive integer with 
$n<m$ and $(m,n)=d$. Then, $x\in (g^n-1)RG$ if and only if the following holds:
\begin{equation}
 \begin{cases}\tag{4.2}
      r_0+r_d+r_{2d}+\cdots+r_{(e-1)d}=0 \\
      r_1+r_{d+1}+r_{2d+1}+\cdots+r_{(e-1)d+1}=0 \\
      \hdotsfor{1} \\
      r_{d-1}+r_{2d-1}+r_{3d-1}+\cdots+r_{m-1}=0
 \end{cases}
\end{equation}
where $e=m/d$.
\end{lem}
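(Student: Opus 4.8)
The plan is to replace the principal ideal $(g^n-1)RG$ by one of the ideals $\Phi(N)$, for which we already know $RG/\Phi(N)\cong R(G/N)$, and then read the condition off the quotient. First I would invoke Lemma~\ref{Lemma 4.2} to get $\langle g^n\rangle=\langle g^d\rangle$, and then show $(g^n-1)RG=(g^d-1)RG$: since $d\mid n$, the element $g^n=(g^d)^{n/d}$ is a power of $g^d$, while $g^d\in\langle g^d\rangle=\langle g^n\rangle$ is a power of $g^n$, so applying the factorization $a^\ell-1=(a-1)(a^{\ell-1}+\cdots+a+1)$ in both directions yields the two inclusions. Next I would observe that, $\langle g^d\rangle$ being cyclic with generator $g^d$, its augmentation ideal is $I(\langle g^d\rangle)=(g^d-1)R\langle g^d\rangle$ (the same computation used in the proof of Lemma~\ref{Lemma 3.4}), hence $\Phi(\langle g^d\rangle)=I(\langle g^d\rangle)RG=(g^d-1)RG=(g^n-1)RG$. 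Therefore $x\in(g^n-1)RG$ if and only if $x$ lies in the kernel of the canonical $R$-algebra surjection $\pi\colon RG\to R(G/\langle g^d\rangle)$, since $\ker\pi=\Phi(\langle g^d\rangle)$ (see the Introduction, and [2, Chap.2, Proposition 2.10]).

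Then I would compute $\pi(x)$ explicitly. Setting $e=m/d$, one has $|\langle g^d\rangle|=m/(m,d)=m/d=e$, so $G/\langle g^d\rangle$ is cyclic of order $d$ generated by $\bar{g}=g\langle g^d\rangle$, and $\{\bar{g}^{0},\dots,\bar{g}^{d-1}\}$ is an $R$-basis of $R(G/\langle g^d\rangle)$. Re-indexing $\{0,1,\dots,m-1\}$ as $\{\,j+td : 0\le j\le d-1,\ 0\le t\le e-1\,\}$ and using $\bar{g}^{d}=1$ gives
\begin{equation*}
  \pi(x)=\sum_{i=0}^{m-1}r_i\bar{g}^{i}=\sum_{j=0}^{d-1}\Big(\sum_{t=0}^{e-1}r_{j+td}\Big)\bar{g}^{j}.
\end{equation*}
By linear independence of the basis elements $\bar{g}^{j}$, we get $\pi(x)=0$ if and only if each coefficient $r_j+r_{j+d}+\cdots+r_{j+(e-1)d}$ vanishes for $j=0,1,\dots,d-1$, which is precisely the system (4.2). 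Combining this with the previous paragraph gives the asserted equivalence.

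I expect no serious obstacle. The only steps requiring a little care are the ideal identity $(g^n-1)RG=(g^d-1)RG$ — which is where Lemma~\ref{Lemma 4.2} is genuinely used — and the purely clerical reindexing $i\leftrightarrow(j,t)$ together with the reduction $\bar{g}^{i}=\bar{g}^{\,i\bmod d}$. I would also note that the hypothesis that $R$ be a field is not actually needed for this lemma; it is inherited from the surrounding discussion, where it is required for the rank criterion of Lemma~\ref{Lemma 4.1}.
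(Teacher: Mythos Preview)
Your argument is correct, but it follows a genuinely different route from the paper. After the common reduction $(g^n-1)RG=(g^d-1)RG$ via Lemma~\ref{Lemma 4.2}, the paper proceeds by direct linear algebra: it writes $x=(g^d-1)z$ with $z=\sum_j t_jg^j$, extracts the resulting $m\times m$ linear system in the unknowns $t_j$, displays the coefficient matrix in $d\times d$ blocks $\pm I_d$, and row-reduces to see that the system is consistent precisely when the sums in (4.2) vanish. You instead identify $(g^d-1)RG$ with $\Phi(\langle g^d\rangle)=\ker\pi$ and read off $\pi(x)$ in the basis $\{\bar g^{\,0},\dots,\bar g^{\,d-1}\}$ of $R(G/\langle g^d\rangle)$.

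Your approach is shorter and more conceptual, leaning on the structural fact $RG/\Phi(N)\cong R(G/N)$ that motivates the whole paper; as you observe, it also makes transparent that the field hypothesis on $R$ is irrelevant here. The paper's approach is more self-contained and computational, in the same spirit as the rank criterion of Lemma~\ref{Lemma 4.1}, and does not appeal back to the kernel description from the Introduction.
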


\begin{proof}
We know that $I(R,\langle g^n \rangle)RG=(g^n-1)RG$ and $I(R,\langle g^d \rangle)RG=(g^d-1)RG$ by 
[2, Chap.3, Lemma 1.3]. 
Hence $(g^n-1)RG=(g^d-1)RG$ by Lemma \ref{Lemma 4.2}, therefore it suffices to show that 
$x\in (g^d-1)RG$ if and only if the condition (4.2) holds. It is clear that we have $x=(g^d-1)z$ with 
$z=\sum_{j=0}^{m-1}t_jg^j \enskip (t_j\in R)$ if and only if the following system of linear equations has a 
solution for $t_0, t_1, \ldots , t_{m-1}$ : \bigskip
\begin{equation}
 \begin{array}{ccccccccccccccccccc}\tag{4.3}
                     &\multicolumn{7}{l}{\overbrace{\hspace{15em}}^{\mbox{$m-d+1$}}}&&&&&&&&&&& \\
   \ldelim({14}{2pt}&-1&&&&&&1&&&&\rdelim){14}{2pt}&  \ldelim({14}{2pt}&t_0&\rdelim){14}{2pt}  &  &  \ldelim({14}{2pt}&r_0&\rdelim){14}{2pt} \\
                            &&-1&&&&&&1&\multicolumn{2}{c}{\text{\Huge 0}}& & &t_1& &  & &r_1& \\
                            &&&\ddots&&&&&&\ddots&& & &\vdots& &  & &\vdots& \\
                            &&&&-1&&&\multicolumn{2}{c}{\text{\Huge 0}}&&1& & &t_{d-1}& &  & &r_{d-1}& \\
                            &1&&&&-1&&&&&& & &t_d& &=& &r_d& \\
                            &&\ddots&&&&\ddots&&&&& & &\vdots& &  & &\vdots& \\
                            &&&\ddots&\multicolumn{2}{c}{\text{\Huge 0}}&&\ddots&&&& & &\vdots& &  & &\vdots& \\
                            &&&&\ddots&&&&\ddots&&& & &\vdots& &  & &\vdots& \\
                            &&\multicolumn{2}{c}{\text{\Huge 0}}&&\ddots&&&&\ddots&& & &\vdots& &  & &\vdots& \\
                            &&&&&&1&&&&-1& & &t_{m-1}& &  & &r_{m-1}& \\
                            &&&&&&\multicolumn{5}{l}{\underbrace{\hspace{10em}}_{\mbox{$d+1$}}}&&&&&&&&
 \end{array} \enskip .
\end{equation}
\newpage
The corresponding matrix of coefficients can also be expressed as follows :
\begin{equation*}
\left(
 \begin{array}{cccccc}
  \begin{array}{|ccc|}
  \hline
  && \\
  &\text{\large $-I_d$}& \\
  && \\
  \hline
  \end{array}
  &&&&
  \begin{array}{|ccc|}
  \hline
  && \\
  &\text{\large $\ I_d \ $}& \\
  && \\
  \hline
  \end{array}
  &
  \begin{array}{c}
  r_0 \\
  \vdots \\
  r_{d-1}
  \end{array} \\
  \begin{array}{|ccc|}
  \hline
  && \\
  &\text{\large $\ I_d \ $}& \\
  && \\
  \hline
  \end{array}
  &
  \begin{array}{|ccc|}
  \hline
  && \\
  &\text{\large $-I_d$}& \\
  && \\
  \hline
  \end{array}
  &&\text{\Huge 0}&&
  \begin{array}{c}
  \vdots \\
  \vdots \\
  \vdots
  \end{array} \\
  &
  \begin{array}{|ccc|}
  \hline
  && \\
  &\text{\large $\ I_d \ $}& \\
  && \\
  \hline
  \end{array}
  &
  \begin{array}{ccc}
  \ddots&& \\
  &\ddots& \\
  &&\ddots
  \end{array}
  &&&
  \begin{array}{c}
  \vdots \\
  \vdots \\
  \vdots
  \end{array} \\
  &&
  \begin{array}{ccc}
  \ddots&& \\
  &\ddots& \\
  &&\ddots
  \end{array}
  &
  \begin{array}{|ccc|}
  \hline
  && \\
  &\text{\large $-I_d$}& \\
  && \\
  \hline
  \end{array}
  &&
  \begin{array}{c}
  \vdots \\
  \vdots \\
  \vdots
  \end{array} \\
  \text{\Huge 0}&&&
  \begin{array}{|ccc|}
  \hline
  && \\
  &\text{\large $\ I_d \ $}& \\
  && \\
  \hline
  \end{array}
  &
  \begin{array}{|ccc|}
  \hline
  && \\
  &\text{\large $-I_d$}& \\
  && \\
  \hline
  \end{array}
  &
  \begin{array}{c}
  r_{m-d} \\
  \vdots \\
  r_{m-1}
  \end{array}
 \end{array}
\right) ,
\end{equation*}
where $I_d$ denotes the identity matrix of size $d$. Using elementary row operations, 
we can transform this matrix into the following
\begin{equation*}
\left(
 \begin{array}{cccccc}
  %1st row-------------------------------------------------------
  \begin{array}{|ccc|}
  \hline
  && \\
  &\text{\large $-I_d$}& \\
  && \\
  \hline
  \end{array}
  &&&&
  \begin{array}{|ccc|}
  \hline
  && \\
  &\text{\large $\ I_d \ $}& \\
  && \\
  \hline
  \end{array}
  &
  \begin{array}{c}
  r_0 \\
  \vdots \\
  r_{d-1}
  \end{array} \\
  %2nd row------------------------------------------------------
  &
  \begin{array}{|ccc|}
  \hline
  && \\
  &\text{\large $-I_d$}& \\
  && \\
  \hline
  \end{array}
  &&\text{\Huge 0}&
  \begin{array}{|ccc|}
  \hline
  && \\
  &\text{\large $\ I_d \ $}& \\
  && \\
  \hline
  \end{array}
  &
  \begin{array}{c}
  r_0+r_d \\
  \vdots \\
  r_{d-1}+r_{2d-1}
  \end{array} \\
  %3rd row-------------------------------------------------------
  &&
  \begin{array}{ccc}
  \ddots&& \\
  &\ddots& \\
  &&\ddots
  \end{array}
  &&
  \begin{array}{c}
  \vdots \\
  \vdots \\
  \vdots
  \end{array}
  &
  \begin{array}{c}
  \vdots \\
  \vdots \\
  \vdots
  \end{array} \\
  %4th row-------------------------------------------------------
  &&&
  \begin{array}{|ccc|}
  \hline
  && \\
  &\text{\large $-I_d$}& \\
  && \\
  \hline
  \end{array}
  &
  \begin{array}{|ccc|}
  \hline
  && \\
  &\text{\large $\ I_d \ $}& \\
  && \\
  \hline
  \end{array}
  &
  \begin{array}{c}
  \sum_{i=0}^{e-2}r_{id} \\
  \vdots \\
  \sum_{i=0}^{e-2}r_{d-1+id}
  \end{array} \\
  %5th row------------------------------------------------------
  \text{\Huge 0}&&&&&
  \begin{array}{c}
  \sum_{i=0}^{e-1}r_{id} \\
  \vdots \\
  \sum_{i=0}^{e-1}r_{d-1+id}
  \end{array}
 \end{array}
\right) .
\end{equation*}
Hence we see that the system (4.3) has a solution if and only if the condition (4.2) holds. 
This completes the proof.
\end{proof}

In the following statements, for a field $R$, ${\rm dim}_R(\quad)$ denotes the dimension of an $R$-vector space.

\begin{lem}\label{Lemma 4.4}% Lemma 4.4
Let $R$ be a field, $G$ a finite cyclic group, and $x$ a nonzero element of $RG$. Then ${\rm dim}_R\thinspace xRG={\rm rank}(A_x)$, 
where $A_x$ is a matrix defined in {\rm Lemma \ref{Lemma 4.1}}.
\end{lem}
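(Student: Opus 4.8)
The plan is to identify $xRG$ with the image of a suitable linear map, represented by the matrix $A_x$, and then read off the dimension as the rank.

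First I would set up notation: let $G = \langle g \rangle$ have order $m$, so that $RG = \bigoplus_{i=0}^{m-1} R g^i$ as an $R$-vector space, and $\{g^i\}$ is an ordered basis. Multiplication by $x = \sum_{i=0}^{m-1} r_i g^i$ is an $R$-linear endomorphism $\mu_x$ of $RG$, and since $R$ is a field, $xRG = \operatorname{im}\mu_x$, whence $\dim_R xRG = \operatorname{rank}$ of the matrix of $\mu_x$ with respect to $\{1, g, \dots, g^{m-1}\}$. So the whole statement reduces to checking that this matrix is exactly $A_x$ (or differs from it only by a permutation of rows/columns, which does not change the rank).

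The key computation is then purely mechanical: the $j$-th column of $\mu_x$ records the coefficients of $x g^j = \sum_i r_i g^{i+j}$, where exponents are read modulo $m$; so the coefficient of $g^k$ in $x g^j$ is $r_{k-j \bmod m}$. Reindexing the rows and columns to match the cyclic pattern displayed in Lemma \ref{Lemma 4.1} — rows indexed so that the top row reads $r_{m-1}, r_{m-2}, \dots, r_1, r_0$ and successive rows are cyclic shifts — one sees the matrix of $\mu_x$ is a circulant in the $r_i$, and after the appropriate relabeling of basis vectors it is literally $A_x$. Since row and column permutations are invertible, $\operatorname{rank}$ is preserved, giving $\dim_R xRG = \operatorname{rank}(A_x)$. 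I would also note the degenerate consistency check that when $x \ne 0$ at least one $r_i \ne 0$, so $\operatorname{rank}(A_x) \ge 1$, matching $xRG \ne 0$.

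The only mildly delicate point — hardly an obstacle — is bookkeeping: making sure the correspondence between the ordering of basis elements in $RG$ and the ordering of rows/columns in the printed form of $A_x$ is stated carefully enough that "the matrix of $\mu_x$ equals $A_x$" is literally true rather than true-up-to-obvious-permutation. I would handle this by explicitly saying that $A_x$ is the matrix of multiplication by $x$ with respect to the basis $g^{m-1}, g^{m-2}, \dots, g, 1$ (in both domain and codomain), or equivalently the transpose of the circulant, and then invoking that transposition and basis reordering leave the rank unchanged. Everything else is immediate from the fact that over a field the image of a linear map has dimension equal to the rank of any matrix representing it.
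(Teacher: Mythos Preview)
Your proposal is correct and follows essentially the same approach as the paper: both identify $xRG$ with the $R$-span of $\{g^j x\}_{0\le j\le m-1}$, write out the coordinate vectors of these elements with respect to the basis $G$, and observe that the resulting circulant matrix coincides with $A_x$ up to a harmless reordering of rows/columns, so that $\dim_R xRG=\operatorname{rank}(A_x)$. The only difference is packaging---you phrase it via the endomorphism $\mu_x$ and its matrix, while the paper lists the row vectors directly---but the content is identical.
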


\begin{proof}
Suppose $G=\langle g \rangle$ with $|G|=m$, and write $x=\sum_{i=0}^{m-1}r_ig^i \enskip (r_i\in R)$.
We see that $xRG$ is generated over $R$ by $\{ g^j x\}_{0\leq j\leq m-1}$. Choosing $G$ as a basis of $RG$, 
we have a canonical isomorphism of $R$-vector spaces $RG\cong R^m$. This isomorphism maps the elements of 
$\{ g^j x\}_{0\leq j\leq m-1}$ as follows.
\begin{equation*}
 \begin{array}{ccc}
 x & \longmapsto & (r_{0\quad \ }, r_{1\quad \ }, r_{2\quad \ },\quad \ldots \quad, r_{m-2}, r_{m-1}) \\
 gx &\longmapsto & (r_{m-1}, r_{0\quad \ }, r_{1\quad \ },\quad \ldots \quad, r_{m-3}, r_{m-2}) \\ 
 g^2x &\longmapsto & (r_{m-2}, r_{m-1}, r_{0\quad \ },\quad \ldots \quad, r_{m-4}, r_{m-3}) \\
         & \vdots & \\
  g^{m-1}x &\longmapsto & (r_{1\quad \ }, r_{2\quad \ }, r_{3\quad \ },\quad \ldots \quad, r_{m-1}, r_{0\quad \ }) \\
 \end{array}
\end{equation*}
Thus we get ${\rm dim}_R\thinspace xRG={\rm rank}(A_x)$ as desired.
\end{proof}

\begin{prop}% Proposition 4.5 (old Theorem 4.5)
Let $R$ be a field, and let $G$ be a finite cyclic group of order $m$ with a generator $g$. 
Suppose $x$ is a nonzero element of $RG$, and set $d=m-{\rm rank}(A_x)$, where $A_x$ is a matrix defined in 
{\rm Lemma 4.1}. Then the following are equivalent.\\
$(1)$ $xRG=\Phi(N)$ for a subgroup $N\ne 1$.\\
$(2)$ $d$ is a divisor of $m$ for which {\rm (4.2)} of {\rm Lemma} $\ref{Lemma 4.3}$ holds 
and ${\rm rank}(A_x)={\rm rank}(\tilde{A}_{x,d})$, where $\tilde{A}_{x,d}$ is a matrix defined in {\rm Lemma} $\ref{Lemma 4.1}$.\\
Moreover, if these are satisfied, $xRG=\Phi(\langle g^d \rangle)$, $RG/xRG\cong R(G/\langle g^d \rangle)$.
\end{prop}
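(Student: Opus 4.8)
The plan is to show the two conditions are equivalent by producing, in each direction, the candidate subgroup $N=\langle g^d\rangle$ and verifying the defining properties of $\Phi(N)$ using the linear-algebra dictionary already established. Write $r={\rm rank}(A_x)$, so $d=m-r$, and recall from Lemma \ref{Lemma 4.4} that $r={\rm dim}_R\, xRG$, hence $d={\rm dim}_R(RG/xRG)$. The key auxiliary observation is that for a subgroup $N$ of $G$ of index $e$ in $G$ (so $|N|=m/e$), one has $\Phi(N)=I(R,N)RG$ with $RG/\Phi(N)\cong R(G/N)$, a group ring over $R$ of an order-$e$ cyclic group, so ${\rm dim}_R RG/\Phi(N)=e=[G:N]$; and when $G$ is cyclic, every such $N$ is $\langle g^{d'}\rangle$ for the unique divisor $d'=|N|$ of $m$, with $[G:N]=m/d'$. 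So if $xRG=\Phi(N)$ for some $N\ne 1$, then comparing dimensions forces $[G:N]=d$, i.e.\ $|N|$ is the divisor $m/d$ — in particular $d\mid m$ — and $N=\langle g^{m/d}\rangle=\langle g^d\rangle$ by Lemma \ref{Lemma 4.2}.

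Granting that, the equivalence reduces to identifying when $xRG=\Phi(\langle g^d\rangle)=(g^d-1)RG$. By Lemma \ref{Lemma 4.3}, the inclusion $x\in(g^d-1)RG$ — equivalently $xRG\subseteq(g^d-1)RG$ — holds exactly when system (4.2) is satisfied (this is where "$d$ is a divisor of $m$ for which (4.2) holds" enters). Conversely, $(g^d-1)RG\subseteq xRG$ means $g^d-1\in xRG$, and since $d<m$ (as $x\ne0$ gives $r\ge1$), Lemma \ref{Lemma 4.1} applies with $n=d$: this inclusion holds iff ${\rm rank}(A_x)={\rm rank}(\tilde A_{x,d})$. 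Thus $xRG=(g^d-1)RG$ iff both (4.2) holds and the rank equality holds, which is precisely condition (2). This gives (2)$\Rightarrow$(1) directly, taking $N=\langle g^d\rangle$, which is nontrivial because $d\ne m$; and it gives (1)$\Rightarrow$(2) once we know, from the dimension argument of the first paragraph, that the only possible $N$ is $\langle g^d\rangle$.

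The final "moreover" clause is then immediate: under (1)$\Leftrightarrow$(2) we have shown $xRG=\Phi(\langle g^d\rangle)$, and $RG/xRG=RG/\Phi(\langle g^d\rangle)\cong R(G/\langle g^d\rangle)$ by the basic property of $\Phi$ recalled in the introduction.

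The main obstacle I expect is the bookkeeping in the first paragraph — pinning down that $d$ must equal the index $[G:N]$ and that $d\mid m$ is forced rather than assumed. The cleanest route is the dimension count via Lemma \ref{Lemma 4.4}: ${\rm dim}_R RG/xRG=d$, while ${\rm dim}_R RG/\Phi(N)=[G:N]$ is a divisor of $m$; equating them identifies $N$ uniquely as $\langle g^d\rangle$ and simultaneously shows $d\mid m$, so that in statement (2) the phrase "$d$ is a divisor of $m$" records a consequence rather than an extra hypothesis. One should also double-check the degenerate edge cases ($x$ a unit is excluded by $x\ne0$? no — one must note $r=m$ would give $d=0$; but then (4.2) cannot hold nontrivially, or rather $xRG=RG\notin\Phi(\frak S)$, consistent with both (1) and (2) failing), and confirm $d\ge1$ so that $\langle g^d\rangle\ne1$ exactly when $r<m$, matching "$N\ne1$".
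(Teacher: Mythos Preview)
Your argument follows essentially the same route as the paper's: the dimension count via Lemma~\ref{Lemma 4.4} pins down the candidate subgroup, and then Lemmas~\ref{Lemma 4.1} and~\ref{Lemma 4.3} translate the two inclusions $xRG\subseteq(g^d-1)RG$ and $(g^d-1)RG\subseteq xRG$ into condition~(4.2) and the rank equality, respectively. The paper's (1)$\Rightarrow$(2) is phrased slightly differently---it starts from an arbitrary generator $g^n$ of $N$, sets $d'=(m,n)$, and then shows $d'=d$ via the same dimension comparison---but the content is identical.

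There is one genuine slip in your bookkeeping. From $[G:N]=d$ you correctly deduce $|N|=m/d$, but then you write ``$N=\langle g^{m/d}\rangle=\langle g^d\rangle$ by Lemma~\ref{Lemma 4.2}.'' The intermediate term is wrong: $\langle g^{m/d}\rangle$ has order $m/\gcd(m,m/d)=d$, not $m/d$, so it is \emph{not} equal to $N$ in general (take $m=12$, $d=4$). Lemma~\ref{Lemma 4.2} does not give this equality either. The correct statement is simply that the unique subgroup of index $d$ in a cyclic group of order $m$ is $\langle g^d\rangle$ (since $|\langle g^d\rangle|=m/d$ when $d\mid m$), so $N=\langle g^d\rangle$ directly. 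With this correction the proof goes through, and the rest of your argument---including the edge-case discussion of $d=0$ and $d<m$---is sound.
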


\begin{proof}
(1)$\Rightarrow$(2). Assume $N=\langle g^n \rangle$ with $0<n<m$. Let $(m,n)=d^{\prime}$, then 
$\langle g^n \rangle = \langle g^{d^{\prime}} \rangle$ by Lemma \ref{Lemma 4.2}. Since $\Phi(\langle g^{d^{\prime}} \rangle)=
I(R, \langle g^{d^{\prime}} \rangle)RG=(g^{d^{\prime}}-1)RG$ by [2, Chap.3, Lemma 1.3], we have $xRG=\Phi(\langle 
g^{d^{\prime}} \rangle)=(g^{d^{\prime}}-1)RG$. Hence $RG/xRG=RG/\Phi(\langle g^{d^{\prime}} \rangle)\cong R(G/\langle 
g^{d^{\prime}} \rangle)$ (see Section 1). Therefore, ${\rm dim}_R\thinspace RG/xRG=|G/\langle g^{d^{\prime}} \rangle|=
d^{\prime}$. On the other hand, ${\rm dim}_R\thinspace RG/xRG=m-{\rm dim}_R\thinspace xRG$. Thus 
$d^{\prime}=m-{\rm rank}(A_x)$ by Lemma \ref{Lemma 4.4} and, consequently, $d^{\prime}=d$. Since $xRG=(g^d-1)RG$, 
from Lemma \ref{Lemma 4.1} and Lemma \ref{Lemma 4.3}, (4.2) of Lemma \ref{Lemma 4.3} holds and 
${\rm rank}(A_x)={\rm rank}(\tilde{A}_{x,d})$ for $d$.\par
(2)$\Rightarrow$(1). From Lemma \ref{Lemma 4.1} and Lemma \ref{Lemma 4.3}, we have $xRG=(g^d-1)RG$. 
Thus $xRG=I(R,\langle g^d \rangle)RG=\Phi(\langle g^d \rangle)$ with $\langle g^d \rangle \ne 1$.
\end{proof}

\begin{exa}% Example 4.6
Let $\mathbb{F}_5=\mathbb{Z}/5\mathbb{Z}$, the prime field of order 5, and let $\alpha=2{\rm mod}5\in \mathbb{F}_5$. 
We denote by $C_{12}$ a cyclic group of order 12 with a generator $g$. Let $x$ be as follows : \\
\begin{equation*}
x=g+\alpha^3g^2+g^3+g^4+\alpha^3g^5+g^6+g^7+\alpha^2g^8+g^9+g^{10}+\alpha^3g^{11}\in \mathbb{F}_5C_{12}.
\end{equation*}
Then, ${\rm rank}(A_x)=8$, $d=|C_{12}|-{\rm rank}(A_x)=4$. Denoting $x=\sum_{i=0}^{11}r_ig^i \enskip (r_i\in \mathbb{F}_5)$, 
we have \\
\begin{equation*}
 \begin{array}{ccccc}
    r_0+r_4+r_8 & = & 0+1+\alpha^2 & = & 0, \\
    r_1+r_5+r_9 & = & 1+\alpha^3+1 & = & 0, \\
    r_2+r_6+r_{10} & = & \alpha^3+1+1 & = & 0, \\
    r_3+r_7+r_{11} & = & 1+1+\alpha^3 & = & 0,
 \end{array}
\end{equation*}
which shows (4.2) of Lemma \ref{Lemma 4.3} holds. Since ${\rm rank}(\tilde{A}_{x,4})=8$, we get $x\thinspace \mathbb{F}_5C_{12}=
\Phi(\langle g^4 \rangle)$ and $\mathbb{F}_5C_{12}/x\thinspace \mathbb{F}_5C_{12}\cong \mathbb{F}_5(C_{12}/\langle g^4 \rangle)$ 
as $\mathbb{F}_5$-algebras.
\end{exa}

For infinite cyclic groups, we have

\begin{prop}% Proposition 4.7
Let $R$ be an integral domain, and let $G$ be an infinite cyclic group. Suppose $x$ is a nonzero element of $RG$. Then the following are 
equivalent.\\
$(1)$ $xRG=\Phi(N)$ for a subgroup $N\ne 1$.\\
$(2)$ $x=ug_1-ug_2$ for a unit $u$ of $R$ and two elements $g_1$, $g_2$ of $G$.\\
Moreover, if these are satisfied, $xRG=\Phi(\langle h \rangle)$, $RG/xRG\cong R(G/\langle h \rangle)$, where $h=g_1g_2^{-1}$.
\end{prop}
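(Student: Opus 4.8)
The plan is to identify $RG$ with the Laurent polynomial ring $R[g,g^{-1}]$, which is a domain since it is a localization of the domain $R[g]$, and to exploit the concrete description of the ideals $\Phi(N)$. Every subgroup $N\ne 1$ of the infinite cyclic group $G=\langle g\rangle$ is of the form $N=\langle g^{n}\rangle$ with $n\ge 1$, and by [2, Chap.3, Lemma 1.3] one has $\Phi(N)=I(R,\langle g^{n}\rangle)RG=(g^{n}-1)RG$. Thus condition $(1)$ asserts exactly that $xRG=(g^{n}-1)RG$ for some integer $n\ge 1$, and the moreover clause will drop out once the correct $n$ is identified.

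For $(2)\Rightarrow(1)$ I would simply factor $x=ug_{1}-ug_{2}=(ug_{2})(h-1)$ with $h=g_{1}g_{2}^{-1}$. Since $u\in R^{\times}$ and $g_{2}\in G$, the element $ug_{2}$ is a unit of $RG$, so $xRG=(h-1)RG$. As $x\ne 0$ we have $g_{1}\ne g_{2}$, hence $h\ne 1$ and $\langle h\rangle\ne 1$; therefore $xRG=(h-1)RG=I(R,\langle h\rangle)RG=\Phi(\langle h\rangle)$, and $RG/xRG\cong R(G/\langle h\rangle)$ by the isomorphism recalled in Section 1.

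For $(1)\Rightarrow(2)$ I would argue as follows. Given $xRG=(g^{n}-1)RG$ with $n\ge 1$, the nonzero elements $x$ and $g^{n}-1$ of the domain $RG$ generate the same ideal, hence are associates: $x=v(g^{n}-1)$ for some unit $v$ of $RG$. The crucial step is to determine the unit group of $RG=R[g,g^{-1}]$, which over a domain consists exactly of the monomials $ug^{k}$ with $u\in R^{\times}$ and $k\in\mathbb{Z}$. Granting this, $x=ug^{k}(g^{n}-1)=ug^{n+k}-ug^{k}$, so $(2)$ holds with $g_{1}=g^{n+k}$ and $g_{2}=g^{k}$; and since then $h=g_{1}g_{2}^{-1}=g^{n}$ generates $N$, the moreover clause follows as before.

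The main obstacle is thus the unit computation for $R[g,g^{-1}]$ over a domain, which I would carry out directly. If $x=\sum_{i=a}^{b}r_{i}g^{i}$ with $r_{a},r_{b}\ne 0$ and $y=\sum_{j=c}^{d}s_{j}g^{j}$ with $s_{c},s_{d}\ne 0$ satisfy $xy=1$, then the top term of $xy$ is $r_{b}s_{d}g^{b+d}$ and the bottom term is $r_{a}s_{c}g^{a+c}$, and both coefficients are nonzero because $R$ is a domain; hence $b+d=0$ and $a+c=0$, so $b-a=c-d$. Since $b\ge a$ and $c\le d$ this forces $a=b$, so $x=r_{a}g^{a}$ is a monomial with $r_{a}\in R^{\times}$. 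Once this lemma is in place, the rest of the proof is just a short manipulation built on $\Phi(\langle g^{n}\rangle)=(g^{n}-1)RG$.
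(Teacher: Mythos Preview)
Your proof is correct and follows the same overall strategy as the paper: both directions reduce to the identity $\Phi(\langle h\rangle)=(h-1)RG$, and the nontrivial implication $(1)\Rightarrow(2)$ comes down to showing that $x$ and $h-1$ differ by a unit of $RG$, together with the fact that units of $RG$ are trivial.

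The execution of $(1)\Rightarrow(2)$ differs slightly. You observe at the outset that $RG\cong R[g,g^{-1}]$ is a domain, so two nonzero generators of the same principal ideal are automatically associates; you then prove the unit group of $R[g,g^{-1}]$ by a short degree argument. The paper instead writes $h-1=xy$ and $x=(h-1)z$, substitutes to obtain $h(1-yz)=1-yz$, and uses a support-shifting argument (multiplication by $h\ne1$ moves the support of any nonzero element) to force $yz=1$; it then cites [2, Chap.~2, Proposition~2.23] for the triviality of units. Your route is a bit more streamlined and self-contained, while the paper's avoids naming the domain property explicitly, but the underlying content is the same.
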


\begin{proof}
Since $G$ is a cyclic group, we see that (1) holds if and only if there exists an element $h$ of $G$ satisfying $(h-1)RG=xRG$. 
So, we replace (1) with the condition $\rm{(1)}^{\prime}$: $xRG=(h-1)RG$ for an element $h$ of $G$.\par
$\rm{(1)}^{\prime}$$\Rightarrow$(2). We have $h-1=xy$ for an element $y$ of $RG$, and $x=(h-1)z$ for an element $z$ of $RG$. 
Hence $h=xy+1=(h-1)zy+1=hyz-yz+1$. Thus $h-hyz=1-yz$, that is, $h(1-yz)=1-yz$. For each element $w$ of $RG$, define 
${\rm Supp}(w)$ as follows: if $w=\sum_{g\in G}r_gg(r_g\in R)$ then ${\rm Supp}(w)=\{~g\in G~|~r_g\ne 0~\}$. Using this notation, 
we can say that if $1-yz\ne 0$ then ${\rm Supp}(h(1-yz))\ne {\rm Supp}(1-yz)$ since $G$ is an infinite cyclic group. Thus $1-yz=0$, 
so $y$ is a unit of $RG$. Because $R$ is an integral domain and $G$ is torsion-free, $y$ is a trivial unit, that is, $y=rt$ for a unit $r$ 
of $R$ and an element $t$ of $G$ by [2, Chap.2, Proposition 2.23]. Recalling $xy=h-1$, we get $x=y^{-1}h-y^{-1}=r^{-1}(t^{-1}h)-r^{-1}t^{-1}$. 
If we set $u=r^{-1}$, $g_1=t^{-1}h$, and $g_2=t^{-1}$, then we have $x=ug_1-ug_2$ for a unit $u$ of $R$ and $g_1, g_2\in G$.\par
(2)$\Rightarrow$$\rm{(1)}^{\prime}$. Setting $h=g_1g_2^{-1}$, we get $x=uhg_2-ug_2=(h-1)ug_2$.
\end{proof}

\end{document}